\newtheorem{theorem}{Theorem}
\newtheorem{lemma}[theorem]{Lemma}
\newtheorem{prop}[theorem]{Proposition}
\newtheorem{conj}[theorem]{Conjecture}
\def\QED{\ensuremath{{\square}}}
\def\markatright#1{\leavevmode\unskip\nobreak\quad\hspace*{\fill}{#1}}
\newenvironment{proof}
 {\begin{trivlist}\item[\hskip\labelsep{\bf Proof.}]}
 {\markatright{\QED}\end{trivlist}}
\newcommand\blfootnote[1]{%
	\begingroup
	\renewcommand\thefootnote{}\footnote{#1}
	\addtocounter{footnote}{-1}%
	\endgroup
}
\newcommand{\rcrs}{\overline{\operatorname{cr}}}
\newcommand{\crs}{\operatorname{cr}}
\title{On the rectilinear crossing number of complete balanced multipartite graphs and layered graphs \footnote{A preliminary version of this work has been presented at EGC'23\cite{egc}}}
\author{Ruy Fabila-Monroy\thanks{Departamento de Matem\'{a}ticas, CINVESTAV} \thanks{{\tt ruyfabila@math.cinvestav.edu.mx}}  
\and Rosna Paul \thanks{Institute for Software Technology, Graz University of Technology, Graz, Austria} \thanks{Supported by the Austrian Science Fund (FWF) grant W1230.}  \thanks{\tt{ropaul@ist.tugraz.at}} 
\and Jenifer Viafara-Chanchi \footnotemark[2] \thanks{\tt{viafara@math.cinvestav.mx }}
\and Alexandra Weinberger\footnotemark[4] \footnotemark[5] \thanks{Department of Mathematics and Computer Science, FernUniversit\"at in Hagen, Hagen, Germany} \thanks{\tt{alexandra.weinberger@fernuni-hagen.de}} }
\begin{document}
\maketitle

\blfootnote{{
		\begin{minipage}[l]{.85\textwidth}\hspace{-1pt}
			This project has received funding from the European Union's Horizon 2020 research and innovation programme under the Marie Sk\l{}odowska-Curie grant agreement No 734922.
		\end{minipage}\hspace{4.5pt}
		\begin{minipage}[l][1.2cm]{0.08\textwidth} \vspace{-7pt}
			\includegraphics[trim=10cm 6cm 10cm 5cm,clip,scale=0.13]{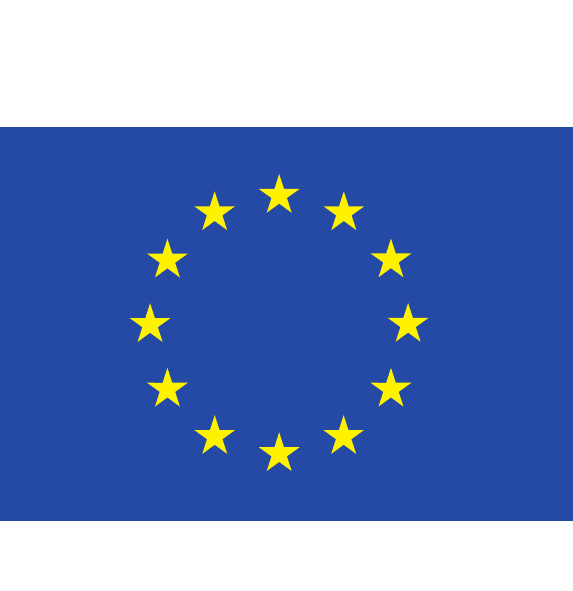} 
		\end{minipage}
		}}
\begin{abstract}
A rectilinear drawing of a graph is a drawing of the graph in the plane
in which the edges are drawn as straight-line segments.
The rectilinear crossing number of a graph is the minimum number
of pairs of edges that cross over all rectilinear drawings of the graph.
Let $n \ge r$ be positive integers.
The graph $K_n^r$,  is the complete $r$-partite
graph on $n$ vertices, in which every set of the partition has at least $\lfloor n/r \rfloor$ vertices.
The layered graph, $L_n^r$, is an $r$-partite graph on $n$ vertices, where $n$ is multiple of $r$. Every partition
of $L_n^r$ contains $n/r$ vertices; for every $1\le i \le r-1$,
all the  vertices in the $i$-th partition are adjacent to all the vertices in the $(i+1)$-th
partition, and these are the only edges of $L_n^r$. In this paper, we give upper bounds
on the rectilinear crossing numbers of $K_n^r$ and~$L_n^r$.%
\end{abstract}

\section{Introduction}
 Let $G$ be a graph on $n$ vertices 
 and let $D$ be a drawing of $G$. The \emph{crossing number} of $D$ is the number, $\crs(D)$, of pairs of edges that cross in~$D$. 
 The \emph{crossing number} of $G$ is the minimum crossing number, $\crs(G)$, over all drawings of $G$ in the plane.
 A \emph{rectilinear drawing} of $G$ is a drawing of $G$ in the plane
 in which its vertices are points in general position, and its
 edges are drawn as straight-line segments joining these points. 
 The \emph{rectilinear crossing number} of $G$, 
 is the minimum crossing number, $\rcrs(G)$, 
 over all rectilinear drawings of $G$ in the plane.
Computing crossing and rectilinear crossing numbers of graphs are important problems in Graph Theory and Combinatorial Geometry. 
For a comprehensive review of the literature on crossing numbers, we refer the reader to Schaefer's book~\cite{schaefer2018crossing}. 

Most of the research on crossing numbers have been focused around the complete graph, $K_n$, and the complete bipartite graph $K_{m,n}$.
For the complete graph, Hill~\cite{hararynumber} gave the following drawing of $K_n$; see Figure~\ref{fig:hill}~(left) for an example. 
Place half of the vertices
equidistantly on the top circle of a cylinder, and the other half equidistantly on the bottom circle.
Join the vertices with geodesics on the cylinder. 
Hill showed that the following number, $H(n)$, is the crossing number of this drawing, and it is now conjectured to be optimal.
Let
\[H(n) :=\frac{1}{4} \left \lfloor \frac{n}{2} \right \rfloor \left \lfloor \frac{n-1}{2} \right \rfloor \left \lfloor \frac{n-2}{2} \right \rfloor \left \lfloor \frac{n-3}{2} \right \rfloor.\]

\begin{conj}[Harary-Hill~\cite{guy_conj}]
\[\crs(K_n) =H(n).\]
\end{conj}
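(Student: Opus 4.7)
The plan is to establish both inequalities $\crs(K_n) \le H(n)$ and $\crs(K_n) \ge H(n)$. The first is already witnessed by Hill's cylindrical construction described immediately above the conjecture: place $\lceil n/2 \rceil$ vertices equispaced on the top circle of a cylinder, $\lfloor n/2 \rfloor$ on the bottom, and join them by geodesics. To finish the upper bound I would partition the edges into three types (top--top chords, bottom--bottom chords, and top--bottom ``band'' segments), count crossings within and across types by a direct combinatorial enumeration indexed by the gaps between endpoint indices on each circle, and separate the cases by the parity of $n$ to confirm that the total is exactly $H(n)$. This portion is routine.

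The genuine difficulty, and indeed the real content of the conjecture, is the matching lower bound. My strategy would follow the $k$-edge framework of Lov\'asz--Vesztergombi--Wagner--Welzl, refined by \'Abrego--Fern\'andez-Merchant and collaborators. For any good drawing $D$ of $K_n$, one derives by double counting an identity of the shape
\[
\crs(D) \;=\; \sum_{k=0}^{\lfloor n/2 \rfloor - 1}(n - 2k - 3)\, E_{\le k}(D) \;-\; C(n),
\]
where $E_{\le k}(D)$ is the number of $\le k$-edges of $D$ and $C(n)$ is an explicit cubic polynomial. The problem then reduces to proving sharp lower bounds $E_{\le k}(D) \ge f(k,n)$ that are simultaneously tight on Hill's drawing for every $k$. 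The known bound $E_{\le k}(D) \ge 3\binom{k+2}{2}$ handles the range $k \le n/3$ and already recovers the correct asymptotic constant.

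The main obstacle is the middle range $n/3 < k < n/2$. Every currently available lower bound on $E_{\le k}(D)$ in that range falls short of the value achieved by the Hill drawing, and the deficit propagates to a constant-factor loss in the final crossing estimate. A successful attack would require either (i) a genuinely new lower bound on $\le k$-edges in the middle regime, presumably exploiting the conjectured rigidity of near-optimal drawings of $K_n$, or (ii) a path that sidesteps $k$-edges entirely --- for instance a flag-algebra semidefinite argument, or a structural classification of extremal drawings that forces the cylindrical configuration. This is the step where I expect every direct attempt to fail, and it is the reason the Harary--Hill statement has resisted attack since the 1950s and is recorded here as a conjecture rather than as a theorem.
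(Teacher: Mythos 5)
This statement is the Harary--Hill conjecture; the paper records it as a \emph{conjecture} and offers no proof, so there is no ``paper's own proof'' to compare against. Your proposal is, accordingly, not a proof: it establishes (in outline) only the upper bound $\crs(K_n)\le H(n)$ via Hill's cylindrical drawing, which is the classical and uncontroversial half, and then candidly explains why the matching lower bound is out of reach. That self-diagnosis is correct and is exactly where the genuine gap lies. The $k$-edge identity and the bound $E_{\le k}\ge 3\binom{k+2}{2}$ (valid for $k<n/3$) recover the leading constant $3/8$ asymptotically, but in the middle range $n/3\le k<n/2$ no known lower bound on $\le k$-edges matches the values attained by Hill's drawing, and the best results to date prove only $\crs(K_n)\ge c\,H(n)$ for some constant $c<1$ (plus exact verification for small $n$). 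Neither of your proposed escape routes --- a new middle-range $\le k$-edge bound or a structural classification of extremal drawings --- is currently available.

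One technical caveat: the $k$-edge machinery you cite (Lov\'asz--Vesztergombi--Wagner--Welzl, \'Abrego--Fern\'andez-Merchant) was developed for \emph{rectilinear} drawings; transferring it to arbitrary good drawings, as the topological conjecture requires, needs the generalized notion of $k$-edges in good drawings and is itself nontrivial. So even the framework you propose for the lower bound requires care before it applies to $\crs$ rather than $\rcrs$. In short: your write-up is an accurate survey of why the statement remains open, but it should not be presented as a proof, and the paper is correct to label the statement a conjecture.
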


\begin{figure}
	\centering
	\includegraphics[page=2]{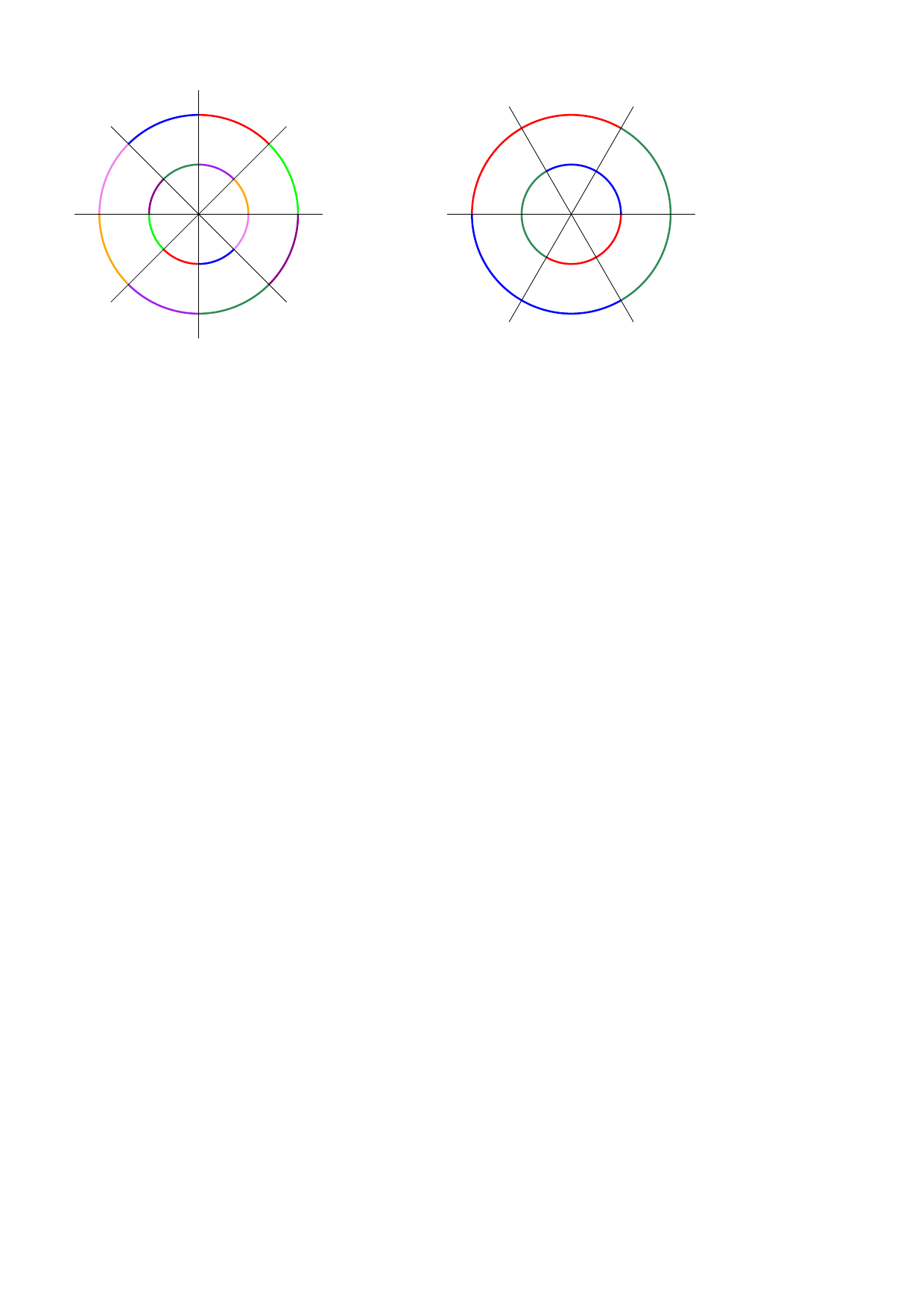}
	\caption{An example of Hill's drawings of $K_{10}$, where here for convenience only the edges of one vertex are drawn. Left: the drawing on a cylinder. Right: an equivalent representation of Hill's drawings via concentric circles.}
	\label{fig:hill}
\end{figure}
Let
\[Z(m,n):=\left \lfloor \frac{n}{2} \right \rfloor \left \lfloor \frac{n-1}{2} \right \rfloor \left \lfloor \frac{m}{2} \right \rfloor \left \lfloor \frac{m-1}{2} \right \rfloor\]
and 
\[Z(n):=Z(n,n).\]
Zarankiewicz~\cite{zaran} gave a drawing of the complete bipartite graph $K_{m,n}$ with $Z(m,n)$ crossings, which he claimed to be optimal. 
Kainen and Ringel  independently found a flaw in Zarankiewicz proof (see~\cite{Guy}). 
\begin{conj}[Zarankiewicz]
\[\crs(K_{m,n}) = Z(m,n).\]
\end{conj}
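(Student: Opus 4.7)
The plan is to split the claimed equality into matching upper and lower bounds. For the upper bound $\crs(K_{m,n}) \le Z(m,n)$, I would write out Zarankiewicz's construction explicitly: place the $m$ vertices of one part on the $x$-axis with $\lceil m/2 \rceil$ of them on the positive ray and $\lfloor m/2 \rfloor$ on the negative ray (in that order, moving outward from the origin), and arrange the $n$ vertices of the other part analogously on the $y$-axis. Drawing each edge as the straight segment between its endpoints, two edges cross precisely when their endpoints on each axis lie on opposite sides of the origin; summing the number of such crossing pairs over the four quadrant-combinations yields exactly $Z(m,n)$.

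The hard direction is the matching lower bound $\crs(K_{m,n}) \ge Z(m,n)$; this is the content of the Zarankiewicz conjecture proper, and it has been open since the 1950s despite much attention. A natural first move is to reduce to the case where both $m$ and $n$ are even by an averaging argument that deletes a vertex of minimum crossing-responsibility. After that one typically attempts a double count over $K_{3,3}$-subgraphs (or some other small complete bipartite subgraph with known crossing number), weighting each crossing by the number of such subgraphs containing it, so that the identity $\crs(K_{3,3}) = 1$ feeds into a global lower bound on $\crs(K_{m,n})$. This kind of counting gives the right asymptotic order but consistently falls short of the sharp constant in $Z(m,n)$.

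The main obstacle, and the reason the conjecture remains open in general, is closing exactly this gap between the lower bound produced by subgraph counting and the extremal value realized by Zarankiewicz's drawing. In the rectilinear setting one might hope to exploit the additional structure coming from straight-line edges --- for instance by analyzing the underlying order type of the point set, by invoking allowable sequences in the spirit of Goodman and Pollack, or by casing on the orientation pattern between the two colour classes --- but even there the sharp bound is established only for small values of $m$, and the general statement remains wide open. For the purposes of this paper I would therefore treat the Zarankiewicz conjecture as a target rather than a tool, and use the matching bound $Z(m,n)$ only as a benchmark when evaluating the quality of the explicit constructions given later for $\rcrs(K_n^r)$ and $\rcrs(L_n^r)$.
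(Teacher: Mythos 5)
This statement is an open conjecture, not a theorem: the paper states it without proof (and indeed recalls that Kainen and Ringel found a flaw in Zarankiewicz's original argument), so there is no ``paper proof'' to compare against. Your assessment is the correct one --- the upper bound $\crs(K_{m,n})\le Z(m,n)$ follows from Zarankiewicz's explicit drawing, the matching lower bound is precisely what remains open, and the honest conclusion is to treat $Z(m,n)$ as a benchmark rather than an established value. That is exactly how the paper uses it (e.g.\ in the conditional lower bound for $\crs(L_n^r)$ at the end of Section~\ref{sec:planted}).

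One concrete error in your verification of the upper bound: in Zarankiewicz's drawing, two independent edges cross precisely when their endpoints on the $x$-axis lie on the \emph{same} side of the origin and likewise their endpoints on the $y$-axis lie on the same side --- not on opposite sides. If the endpoint pairs were on opposite sides on both axes, the two segments would lie in opposite quadrants and could not meet. The correct count is that each pair of $x$-vertices on a common ray together with each pair of $y$-vertices on a common ray contributes exactly one crossing, giving
$\left(\binom{\lceil m/2\rceil}{2}+\binom{\lfloor m/2\rfloor}{2}\right)\left(\binom{\lceil n/2\rceil}{2}+\binom{\lfloor n/2\rfloor}{2}\right)=\lfloor m/2\rfloor\lfloor (m-1)/2\rfloor\lfloor n/2\rfloor\lfloor (n-1)/2\rfloor=Z(m,n)$.
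With that correction, your description of the state of the problem is accurate.
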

It is widely conjectured that Zarankiewicz conjecture holds.
Zarankiewicz drawing of $K_{m,n}$ is rectilinear; thus we also have the following.
\begin{conj}
\[\rcrs(K_{m,n})=\crs(K_{m,n}).\]
\end{conj}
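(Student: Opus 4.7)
The statement is a conjecture rather than a proven theorem, so my plan is to lay out the one direction that is within reach from the material quoted, and identify what is open.

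First, I would record the trivial inequality $\rcrs(G) \ge \crs(G)$ for every graph $G$: any rectilinear drawing is, in particular, a drawing, so taking the minimum over the smaller class of drawings cannot decrease the count. Applied to $K_{m,n}$ this gives $\rcrs(K_{m,n}) \ge \crs(K_{m,n})$, which is half of the conjectured equality and is unconditional.

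For the reverse inequality my plan is to exploit the fact, already noted in the excerpt, that Zarankiewicz's drawing is \emph{rectilinear}. Thus the Zarankiewicz drawing is an admissible competitor in the definition of $\rcrs$, which yields the upper bound $\rcrs(K_{m,n}) \le Z(m,n)$. Combining this with the Zarankiewicz conjecture $\crs(K_{m,n}) = Z(m,n)$ gives $\rcrs(K_{m,n}) \le \crs(K_{m,n})$, and together with the first step this yields equality. Hence, as a formal implication, the conjecture in question is a \emph{corollary of Zarankiewicz's conjecture}: the content of the present conjecture is that no ``exotic'' topological drawing of $K_{m,n}$ can beat the straight-line one.

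The hard part is, of course, that an unconditional proof requires attacking Zarankiewicz's conjecture itself, which has been open for decades and is only known in restricted regimes (small $m$, or asymptotic bounds via embedding methods and the crossing lemma). A plausible \emph{direct} route that avoids proving Zarankiewicz in full would be to argue that any optimal topological drawing of $K_{m,n}$ can be ``straightened'' without increasing the number of crossings, perhaps via a stretchability or continuous deformation argument analogous to those used for small complete graphs. I expect this to be the main obstacle: general topological drawings of $K_{m,n}$ need not be rectilinearly realizable even up to homeomorphism (pseudoline non-stretchability phenomena), so the straightening would have to be done in a crossing-preserving or crossing-decreasing way rather than isotopically, which is precisely where no general technique is currently known.
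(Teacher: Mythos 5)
Your proposal is correct and matches the paper's own reasoning: the statement is a conjecture, and the paper's only justification is precisely your observation that Zarankiewicz's drawing is rectilinear, so the trivial inequality $\rcrs(K_{m,n})\ge\crs(K_{m,n})$ together with Zarankiewicz's conjecture would force equality. You rightly identify that no unconditional proof is given or currently available.
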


Much less is known for the rectilinear crossing number of the complete graph.
\begin{prop}\label{prop:crs<rcrs}
 For $n \ge 10$, \[ \crs(K_{n}) <  \rcrs(K_n).\]
\end{prop}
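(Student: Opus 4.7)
The plan is to combine small-case computer-aided base cases with a sub-drawing averaging argument. The starting point is the computational fact, obtained by exhaustive enumeration of the order types on $10$ points, that $\rcrs(K_{10}) = 62$; comparing this with the Hill bound $\crs(K_{10}) \le H(10) = 60$ gives the statement for $n = 10$.

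To propagate the strict inequality to larger $n$, I would use the following standard double counting: for any rectilinear drawing $D$ of $K_n$ and any $4 \le m \le n$,
\[
\binom{n-4}{m-4}\,\crs(D) \;=\; \sum_{S \in \binom{[n]}{m}} \crs(D[S]),
\]
because every crossing is determined by its four endpoints, which belong to exactly $\binom{n-4}{m-4}$ different $m$-subsets. Since $\crs(D[S]) \ge \rcrs(K_m)$, optimizing $D$ yields the propagation bound $\rcrs(K_n) \ge \binom{n}{4}\,\rcrs(K_m)/\binom{m}{4}$. Coupled with $\crs(K_n) \le H(n)$, it then suffices to exhibit, for each $n \ge 10$, some $m \le n$ with $\binom{n}{4}\rcrs(K_m)/\binom{m}{4} > H(n)$.

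The main obstacle is asymptotic: the ratio $H(n)/\binom{n}{4}$ increases from $2/7$ at $n=10$ to $3/8$ in the limit, whereas averaging from $m=10$ only yields the constant $62/210 < 3/8$, which fails to beat $H(n)$ already at $n=11$. I would resolve this in two steps. First, for moderate $n$, use the computer-determined values of $\rcrs(K_m)$ for $m = 10, 11, 12, 13, \ldots$ and, for each such $n$, pick $m$ large enough that the averaging inequality beats $H(n)$; this handles a finite initial segment and reduces to a routine numerical check. Second, for sufficiently large $n$ invoke the known asymptotic lower bound $\rcrs(K_n)/\binom{n}{4} \to q^* > 3/8$, where $q^*$ is the rectilinear crossing constant; this in turn follows from the monotonicity of the sequence $\rcrs(K_n)/\binom{n}{4}$ together with a computational base case whose ratio strictly exceeds $3/8$. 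Overlapping the two ranges covers every $n \ge 10$, and the resulting strict inequality $\rcrs(K_n) > H(n) \ge \crs(K_n)$ yields the proposition.
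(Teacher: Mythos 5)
Your overall architecture---finite base cases plus the supersaturation bound $\rcrs(K_n) \ge \binom{n}{4}\rcrs(K_m)/\binom{m}{4}$---is exactly the paper's (the paper's deletion argument is the $m=n-1$ case iterated), and you correctly diagnose the central difficulty: both $\rcrs(K_n)/\binom{n}{4}$ and $H(n)/\binom{n}{4}$ increase towards their limits ($\overline{q}$ and $3/8$ respectively), so propagating from a small seed fails almost immediately. The genuine gap is that your two-step repair does not cover the intermediate range. Exact, exhaustively verified values of $\rcrs(K_m)$ exist only for quite small $m$ (the order type database stops at $11$ points, and exact values are known only up to roughly $m=27$), and at every such $m$ the ratio $\rcrs(K_m)/\binom{m}{4}$ is still below $0.36$, while $H(n)/\binom{n}{4}=\tfrac38\bigl(1-\Theta(1/n)\bigr)$ passes $0.36$ already for $n$ in the forties; so your ``moderate $n$'' step reaches only a few dozen values of $n$. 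Meanwhile your justification of the asymptotic step is circular: you propose to deduce $\overline{q}>3/8$ from ``a computational base case whose ratio strictly exceeds $3/8$,'' but no exactly computed $\rcrs(K_m)$ has ratio above $3/8$; every known proof that $\overline{q}>3/8$ (indeed $>0.379972$) goes through analytic $(\le k)$-edge lower bounds. Even granting $\overline{q}>3/8$ as a black box, a statement about the limit yields no effective bound at a specific $n$ in, say, $40\le n\le 160$, so the two ranges you intend to overlap never meet.

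What closes the gap---and what the paper does---is an effective, per-$n$ lower bound on $\rcrs(K_n)$ valid throughout that intermediate range. The paper combines the identity expressing $\rcrs(D)$ in terms of the numbers $E_k$ of $(\le k)$-edges (Lov\'asz--Vesztergombi--Wagner--Welzl and \'Abrego--Fern\'andez-Merchant) with the lower bound on $E_k$ of Aichholzer--Garc\'ia--Orden--Ramos, and checks numerically that the resulting bound exceeds $H(n)$ for every $n=10,\dots,161$ (Table~\ref{tab:values}). Only at $n=158$ does this lower bound, $9372519$, finally exceed $\tfrac38\binom{158}{4}$ (the ratio is $0.375020\ldots$), and from there the averaging propagation you describe finishes all larger $n$, since $H(n)<\tfrac38\binom{n}{4}$ holds for every $n$. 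So your skeleton is sound, but it cannot be executed with exact small values alone; you must import the $k$-edge machinery (or some other effective bound beating $\tfrac38\binom{n}{4}\bigl(1-\Theta(1/n)\bigr)$ for each intermediate $n$) to bridge the range between the exactly known cases and the regime where a fixed seed with ratio above $3/8$ becomes available.
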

This result seems to be folklore; for completenes we provide a proof in the appendix. 
In contrast to the case of the complete bipartite graph, there is no conjectured
value for $\rcrs(K_n)$, nor drawings conjectured to be optimal. The best bounds to date are
\[ 0.379972 \binom{n}{4} <\rcrs(K_n) <  0.380445 \binom{n}{4}+O(n^3).\]
The lower bound is due to \'{A}brego, Fern\'{a}ndez-Merchant, Lea\~{n}os, and Salazar~\cite{aproach}, and the upper bound to
Aichholzer, Duque,  Fabila-Monroy, Garc{\'i}a-Quintero, and Hidalgo-Toscano~\cite{upper_new}.
It is known that \[\lim_{n \to \infty}\frac{\rcrs(K_n)}{\binom{n}{4}}=\overline{q}, \]
for some positive constant $\overline{q}$; this constant is known as the \emph{rectilinear
crossing constant}. For a proof of this fact see the paper by Scheinerman and Wilf~\cite{wilf}.

Let $K_{n_1, n_2,\dots, n_r}$ be the complete $r$-partite graph with $n_i$ vertices
in the $i$-th set of the partition;
and let $K_{n}^r$ be the complete balanced $r$-partite graph in which there are at least 
$\lfloor n/r \rfloor$ vertices in every partition set. 
Harborth~\cite{harborth_rp} gave a drawing that provides an upper bound for 
$\crs (K_{n_1, n_2,\dots,n_r})$; and gave an explicit formula for this number, which he conjectured to be optimal. He observed that for the case of $r=3$,
his drawing can be made rectilinear.
More recently,  Gethner, Hogben, Lidick\'y, Pfender, Ruiz and Young~\cite{gethner2017crossing} independently studied
the problem of the crossing number and rectilinear crossing number of complete balanced $r$-partite graphs. For $r=3$, they obtain
the same bound as Harborth; and their drawing is the rectilinear version of Harborth's drawing.

Let $r$ be a positive integer and let $n$ be a multiple
of $r$. The \emph{balanced layered graph}, $L_n^r$, is the graph defined as follows. Its
vertex set  is partitioned into sets $V_1,\dots,V_r$, each consisting of $n/r$ vertices.
We call the set $V_i$, the $i$-th layer of $L_n^r$. The edge set of $L_n^r$ is given by
\[\{uv: u \in V_i \textrm{ and } v \in V_{i+1}, \textrm{ for } i=1,\dots,r-1\};\]
that is, the edges are exactly all possible edges between vertices on consecutive layers.

In this paper, we mainly focus on the rectilinear crossing numbers of $K_n^r$ and $L_n^r$ .
If $n$ is fixed and $r$ tends to $n$,
then $K_n^r$ tends to $K_n$. We believe that studying the rectilinear
crossing number of $K_n^r$ might shed some light on how optimal rectilinear drawings 
of $K_n$ look like. 

This paper is organized as follows. In Section~\ref{sec:random}, we give a general technique to obtain
non-rectilinear and rectilinear drawings of a given graph $G$ on $n$ vertices. It simply consists of mapping
randomly the vertices of $G$ to optimal drawings of $K_n$. We show how this technique upper bounds 
$\crs(K_n^r)$ and $\rcrs(K_n^r)$. The bounds obtained in this way are very close to being optimal.
However, for the layered graphs this technique gives rather poor upper bounds.
In Section~\ref{sec:planted}, we give a technique were given an specific drawing of a graph, we use
this drawing as a ``seed'' to produce larger drawings by replacing each vertex $u$ with a cluster of collinear vertices
$S_u$ arbitrarily close to $u$. In the new drawing two vertices in different clusters $S_u$ and $S_v$ are adjacent whenever $u$ and $v$ are adjacent in the original drawing.
We call the new larger drawing a ``planted drawing''.
The conjectured crossing optimal drawings of $K_{n,n}$ and $K_{n,n,n}$ mentioned above are actually
planted drawings with drawings of $K_{2,2}$ and $K_{2,2,2}$ as seeds, respectively. However, we show that there is no rectilinear drawing of $K_4$ or $K_8^4$ that can be the seed of a crossing optimal planted drawing of $K_n^4$. For the layered graph, we give a rectilinear planar drawing of $L_{2r}^r$. When used as a seed,
this drawing produces a planted drawing of $L_n^r$, with significantly smaller crossing number, than those produced by the random embedding technique.
The proofs of many of our results are long and technical; for the sake of clarity, we have relocated most of the proofs and constructions to an appendix.

\section{Random Embeddings into Drawings of $K_n$ with Small Crossing Number}\label{sec:random}

Suppose that we have a drawing (that can be rectilinear but does not have to be) $D'$ of $K_n$.
If $\crs(D')$
is small, it might be a good idea to use this drawing to produce a drawing of a graph $G$ on $n$ vertices.
Let $D$ be the drawing of $G$ that is produced by mapping the vertices of $G$ randomly 
to the vertices of $D'$,  and where the edges are drawn as their corresponding edges of $D'$.
We call $D$ a \emph{random embedding} of $G$ into $D'$.

In every $4$-tuple of vertices of $D'$, there are three pairs of independent edges, which could cross.
Of these three pairs at most one pair is crossing.
For every pair of independent edges of $G$, we have
a possible crossing in $D$; thus, the probability that this pair of edges is mapped
to a pair of crossing edges is equal to 
\[ \frac{1}{3}\cdot \frac{\crs(D')}{\binom{n}{4}}. \]
By defining, for every pair of independent edges of $G$, an indicator random variable
with value equal to one if the edges cross and zero otherwise, we obtain the following expression for the expected value of $\crs(D)$,  where $||G||$ is the number of edges in $G$ and $d(v)$ is the degree of a vertex~$v$ of $G$.
\begin{equation}
 \operatorname{E}(\crs(D)) = \frac{cr(D')}{3\binom{n}{4}} \left ( \binom{||G||}{2}-\sum_{v\in V(G)} \binom{d(v)}{2} \right ). \label{eq:random_embeding}
\end{equation}
%

\subsection*{Complete Balanced $r$-partite Graphs}

For an upper bound on the crossing number of $K_n^r$, we use Equation~\ref{eq:random_embeding} and Hill's drawing of $K_n$.
\begin{restatable}{theorem}{knrgen}\label{thm:Knr_gen}
Suppose that $n$ is a multiple of $r$. Let  $D$ be a random embedding of $K_n^r$ into Hill's drawing of~$K_n$.
Then,
\[\crs(K_n^r) \le E(\crs(D)) \le \frac{1}{16} \left (\frac{r-1}{r} \right )^2 \left(\frac{n^4}{4}-\frac{3n^3}{2} \right) +O(n^2).\]
\end{restatable}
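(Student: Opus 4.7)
The plan is to treat this as a direct application of Equation~\ref{eq:random_embeding}. We take $D'$ to be Hill's drawing of $K_n$, so $\crs(D')=H(n)$. The first inequality $\crs(K_n^r)\le \operatorname{E}(\crs(D))$ is the standard probabilistic-method observation: among all mappings of $V(K_n^r)$ into the vertices of $D'$ there must exist one whose resulting drawing has at most the expected number of crossings. Because Hill's drawing is not rectilinear, this route bounds only $\crs(K_n^r)$ (as claimed), not $\rcrs(K_n^r)$.

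Next, I would compute the combinatorial data of $G=K_n^r$. Since $n$ is a multiple of $r$, each part of the partition has $n/r$ vertices; hence every vertex has degree $d=n(r-1)/r$, and the edge count is $m=\binom{r}{2}(n/r)^2=n^2(r-1)/(2r)$. Substituting into~\eqref{eq:random_embeding} gives
\[
\operatorname{E}(\crs(D))=\frac{H(n)}{3\binom{n}{4}}\left(\binom{m}{2}-n\binom{d}{2}\right).
\]
Both factors are now explicit polynomials in $n$ (with a possible parity case for $H(n)$), so the rest of the proof is purely a calculation.

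For the calculation I would first expand the combinatorial factor as
\[
\binom{m}{2}-n\binom{d}{2}=\frac{(r-1)^2}{8r^2}\bigl(n^4-4n^3\bigr)+\frac{n^2(r-1)}{4r},
\]
which is an exact identity. On the Hill side, one checks (handling the $n$ even and $n$ odd cases separately and keeping the leading two terms) that $H(n)=n^4/64-n^3/8+O(n^2)$, and combined with $\binom{n}{4}=n^4/24-n^3/4+O(n^2)$ this yields the asymptotic expansion
\[
\frac{H(n)}{3\binom{n}{4}}=\frac{1}{8}\left(1-\frac{2}{n}\right)+O\!\left(\frac{1}{n^2}\right).
\]
Multiplying the two expressions and collecting powers of $n$ produces the target $\frac{1}{16}\left(\frac{r-1}{r}\right)^2\!\left(\frac{n^4}{4}-\frac{3n^3}{2}\right)+O(n^2)$.

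Conceptually there is no obstacle; the only thing requiring care is keeping the coefficient of $n^3$ correct, since it picks up contributions from two places: the $-4n^3$ inside the edge-count polynomial and the $-2/n$ correction in the expansion of $H(n)/(3\binom{n}{4})$. Together they give a $-6(r-1)^2/(64r^2)=-3(r-1)^2/(32r^2)$ coefficient, which matches the $-\tfrac{3n^3}{2}$ term in the claimed bound. Everything below the $n^3$ level is absorbed into the $O(n^2)$ error.
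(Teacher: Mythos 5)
Your proposal is correct and follows essentially the same route as the paper: it applies Equation~\ref{eq:random_embeding} with $\crs(D')=H(n)$, computes $\binom{\|K_n^r\|}{2}-\sum_v\binom{d(v)}{2}=\frac{(r-1)^2}{8r^2}(n^4-4n^3)+O(n^2)$ (the paper's Lemma~\ref{lem:knr}), and combines it with $\frac{H(n)}{3\binom{n}{4}}\le\frac{1}{8}(1-\frac{2}{n})$ (the paper's Lemma~\ref{lem:H}, which you recover as an asymptotic expansion). The arithmetic, including the $-\frac{3}{2}n^3$ coefficient, checks out.
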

In~\cite{gethner2017crossing}, the authors obtain the same bound on $\crs(K_n^r)$ by considering a random mapping of the vertices of $K_n^r$ into a sphere, and then joining the
corresponding vertices with geodesics. This type of drawing is called a \emph{random geodesic spherical drawing.} In 1965, Moon~\cite{moon65}, showed that the expected number of crossings of a random geodesic spherical
drawing of $K_n$ is equal to 
\[\frac{1}{16}\binom{n}{2}\binom{n-2}{2}  =H(n)-O(n^3);\]
which explains why the bound of Theorem~\ref{thm:Knr_gen} matches the bound of \cite{gethner2017crossing}.

The number, $H(n,r)$,  of crossings in Harborth's~\cite{harborth_rp} drawing of $K_{n}^r$, when $n$ is a multiple of $r$ is at most
\[H(n,r) \le \frac{3}{8} \binom{r}{4} \frac{n^4}{r^4}+
r\left \lfloor \frac{n/r}{2} \right \rfloor \left \lfloor \frac{n/r-1}{2} \right \rfloor \left \lfloor \frac{n-n/r}{2} \right \rfloor \left \lfloor \frac{n-n/r-1}{2} \right \rfloor
-\binom{r}{2}\left ( \left  \lfloor \frac{n/r}{2} \right \rfloor ^2\right)\left ( \left  \lfloor \frac{n/r-1}{2} \right \rfloor ^2\right)+O(n^2).\]
Due to 
the complexity of the formula, we use the following approximation to $H(n,r)$ instead. 
\begin{restatable}{restlemma}{harborth}\label{lem:Hnr}
    If $n$ is a multiple of $r$, then
    \[H(n,r) \le \frac{1}{16} \left ( \frac{r-1}{r}\right )^2 \left ( \frac{n^4}{4} -2 n^3 \right ) +O(n^2).\]
\end{restatable}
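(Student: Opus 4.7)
The plan is a direct asymptotic expansion of the three-term upper bound on $H(n,r)$ displayed immediately above the lemma. Set $m = n/r$ and $p = m(r-1) = n - n/r$; both are positive integers since $n$ is a multiple of $r$. The whole computation relies on the elementary identity
\[\left\lfloor \tfrac{k}{2}\right\rfloor\cdot\left\lfloor \tfrac{k-1}{2}\right\rfloor \;=\; \tfrac{k^{2}}{4}-\tfrac{k}{2}+O(1)\]
for integer $k\ge 1$, which one checks by parity cases (the error is $0$ for $k$ even and $1/4$ for $k$ odd). The key point is that I must carry the $-k/2$ correction through each floor product: a cruder estimate $\lfloor k/2\rfloor\approx k/2$ would give the correct $n^{4}$-coefficient but would destroy the $n^{3}$-coefficient of the claimed bound.

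First, the leading summand $\tfrac{3}{8}\binom{r}{4}\tfrac{n^{4}}{r^{4}}$ is already polynomial and equals $\tfrac{(r-1)(r-2)(r-3)}{64\,r^{3}}\,n^{4}$. Second, I expand the middle summand $r\,\lfloor m/2\rfloor\lfloor(m-1)/2\rfloor\,\lfloor p/2\rfloor\lfloor(p-1)/2\rfloor$ by applying the identity with $k=m$ and with $k=p$, then multiplying the two resulting quadratics. Using $mp=\tfrac{n^{2}(r-1)}{r^{2}}$ and $m^{2}p+mp^{2}=mp(m+p)=\tfrac{n^{3}(r-1)}{r^{2}}$, and multiplying by $r$, this summand becomes $\tfrac{(r-1)^{2}}{16\,r^{3}}\,n^{4} - \tfrac{r-1}{8\,r}\,n^{3}+O(n^{2})$. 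Third, for the subtracted summand I square the identity with $k=m$ to get $\lfloor m/2\rfloor^{2}\lfloor(m-1)/2\rfloor^{2} = \tfrac{m^{4}}{16}-\tfrac{m^{3}}{4}+O(m^{2})$; multiplying by $\binom{r}{2}=\tfrac{r(r-1)}{2}$ and substituting $m=n/r$ gives the contribution $-\tfrac{r-1}{32\,r^{3}}\,n^{4}+\tfrac{r-1}{8\,r^{2}}\,n^{3}+O(n^{2})$.

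Adding the three contributions, the $n^{4}$-coefficient is
\[\tfrac{r-1}{64\,r^{3}}\bigl[(r-2)(r-3)+4(r-1)-2\bigr]\;=\;\tfrac{r-1}{64\,r^{3}}\cdot r(r-1)\;=\;\tfrac{(r-1)^{2}}{64\,r^{2}},\]
which is exactly $\tfrac{1}{16}\bigl(\tfrac{r-1}{r}\bigr)^{2}\cdot\tfrac{1}{4}$, while the $n^{3}$-coefficient is
\[-\tfrac{r-1}{8\,r}+\tfrac{r-1}{8\,r^{2}}\;=\;\tfrac{r-1}{8\,r}\cdot\tfrac{1-r}{r}\;=\;-\tfrac{(r-1)^{2}}{8\,r^{2}},\]
matching $\tfrac{1}{16}\bigl(\tfrac{r-1}{r}\bigr)^{2}\cdot(-2)$. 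All remaining contributions are absorbed into $O(n^{2})$, which gives the stated bound. There is no substantive obstacle: this is a pure algebraic computation, and the only care required is to retain the linear corrections in each floor product so that the $n^{3}$ term emerges with the correct coefficient.
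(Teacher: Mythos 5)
Your proposal is correct and follows essentially the same route as the paper: both expand the three summands of the displayed upper bound on $H(n,r)$ and track the $n^4$ and $n^3$ coefficients, which combine to $\frac{1}{16}\bigl(\frac{r-1}{r}\bigr)^2\bigl(\frac{n^4}{4}-2n^3\bigr)+O(n^2)$. The only (cosmetic) difference is that you package the parity analysis into the single identity $\lfloor k/2\rfloor\lfloor(k-1)/2\rfloor=\frac{k^2}{4}-\frac{k}{2}+O(1)$, whereas the paper writes out the even/odd cases for $n/r$ and $n$ explicitly.
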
  

Let $D$ be as in Theorem~\ref{thm:Knr_gen}; note that by Lemma~\ref{lem:Hnr}, it holds that
\[E(\crs(D))-H(n,r) \le \frac{1}{32}\left ( \frac{r-1}{r}\right )^2  n^3+O(n^2)=O(n^3).\]
Thus, the random embedding gives an upper bound on $\crs(D)$ that matches the conjectured value up to  the leading term, but it is a little worse
in the lower terms.


We now upper bound $\rcrs(K_n^r)$, with this technique.
\begin{restatable}{theorem}{upperrect}\label{lem:upper_rect}
Let $r$ be a positive integer and $n$ be a multiple of $r$.
Let $\overline{D}$ be a random embedding of $K_n^r$ into an optimal rectilinear drawing of $K_n$. Then
\begin{align*}
\rcrs (K_n^r)  & \le E(\crs(\overline{D})) \\
& \le \frac{\overline{q}}{4!} \left( \frac{r-1}{r} \right )^2 n^4+o(n^4)\\
& < 0.015852 \left( \frac{r-1}{r} \right )^2 n^4+o(n^4).
\end{align*}
\end{restatable}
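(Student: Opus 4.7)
The strategy mirrors the proof of Theorem~\ref{thm:Knr_gen}, but with an optimal rectilinear drawing of $K_n$ in place of Hill's drawing. Since $K_n^r$ is the balanced $r$-partite graph on $n$ vertices with $r\mid n$, every vertex has degree $d(v)=n(r-1)/r$ and $||K_n^r||=(r-1)n^2/(2r)$; a direct expansion then gives
\[
\binom{||K_n^r||}{2} - \sum_{v\in V(K_n^r)} \binom{d(v)}{2} = \frac{(r-1)^2}{8r^2}\,n^4 + O(n^3).
\]

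Plugging this into Equation~\ref{eq:random_embeding} with $D'$ an optimal rectilinear drawing of $K_n$, the prefactor $\crs(D')/\binom{n}{4}$ equals $\rcrs(K_n)/\binom{n}{4}$, which by the Scheinerman--Wilf result tends to the rectilinear crossing constant $\overline{q}$. Hence
\[
E(\crs(\overline{D})) \le \left(\frac{\overline{q}}{3}+o(1)\right)\cdot \frac{(r-1)^2}{8r^2}\,n^4 = \frac{\overline{q}}{4!}\left(\frac{r-1}{r}\right)^2 n^4 + o(n^4),
\]
which is the first asymptotic inequality (noting $4!=24$). For the explicit numerical bound, I would invoke the upper bound $\rcrs(K_n)<0.380445\binom{n}{4}+O(n^3)$ of Aichholzer, Duque, Fabila-Monroy, Garc\'{i}a-Quintero, and Hidalgo-Toscano stated earlier, which implies $\overline{q}\le 0.380445$; since $0.380445/24<0.015852$, this yields the strict final inequality.

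The argument is essentially routine once Equation~\ref{eq:random_embeding} is available. The only care needed is to verify that the $O(n^3)$ remainder of the combinatorial factor and the $o(1)$ slack in $\rcrs(K_n)/\binom{n}{4}-\overline{q}$ are both absorbed into the $o(n^4)$ error; neither presents a substantive obstacle, and indeed the proof is only a hair longer than that of Theorem~\ref{thm:Knr_gen}.
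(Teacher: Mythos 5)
Your proposal is correct and follows essentially the same route as the paper: apply Equation~\ref{eq:random_embeding} with $D'$ an optimal rectilinear drawing of $K_n$, use the expansion of $\binom{||K_n^r||}{2}-\sum_v\binom{d(v)}{2}$ (the paper's Lemma~\ref{lem:knr}), and replace $\rcrs(K_n)/\binom{n}{4}$ by $\overline{q}+o(1)$ via Scheinerman--Wilf. The final numerical inequality via $\overline{q}<0.380445$ is exactly the intended (if unstated) last step, since $0.380445/24<0.015852$.
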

For a lower bound we have the following. 

\begin{restatable}{theorem}{lowerrect}\label{lem:lower_rect}
Let $r$ be a positive integer and $n$ be a multiple of $r$.  
Then
    \[ \rcrs(K_n^r) \ge \rcrs(K_r)  \left ( \frac{n}{r} \right )^4.\]
\end{restatable}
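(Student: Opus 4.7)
The plan is a double-counting argument over the $K_r$ subgraphs of $K_n^r$ induced by selecting one vertex from each part. Starting from an optimal rectilinear drawing $D$ of $K_n^r$, so that $\crs(D) = \rcrs(K_n^r)$, I would let $\mathcal{F}$ be the family of all such vertex selections. Since each of the $r$ parts contains $n/r$ vertices, $|\mathcal{F}| = (n/r)^r$. Every $S \in \mathcal{F}$ consists of one vertex from each of the $r$ parts, so the pairs inside $S$ are precisely the edges of $K_r$; moreover, any subset of a point set in general position is itself in general position, so the subdrawing $D_S$ of $D$ induced on $S$ is a valid rectilinear drawing of $K_r$.

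From here I would count
\[ N := \sum_{S \in \mathcal{F}} \crs(D_S) \]
in two different ways. On one hand, $\crs(D_S) \ge \rcrs(K_r)$ for every $S \in \mathcal{F}$, giving $N \ge (n/r)^r \cdot \rcrs(K_r)$. On the other hand, swapping the order of summation shows that $N$ equals the number of pairs $(c, S)$ where $c$ is a crossing of $D$ and $S$ contains all four endpoints of $c$. Because $S$ uses exactly one vertex per part, the four endpoints of such a crossing must lie in four distinct parts; when they do, the remaining $r-4$ parts can be filled in $(n/r)^{r-4}$ independent ways. Hence $N \le \crs(D) \cdot (n/r)^{r-4} = \rcrs(K_n^r) \cdot (n/r)^{r-4}$. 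Combining the two bounds yields $\rcrs(K_n^r) \cdot (n/r)^{r-4} \ge (n/r)^r \cdot \rcrs(K_r)$, which rearranges to the desired inequality.

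The case $r \le 3$ is trivial since $\rcrs(K_r) = 0$, so the substantive case is $r \ge 4$, where the exponent $r-4$ is nonnegative. There is no real obstacle in this argument; the only subtlety worth flagging is that crossings of $D$ whose four endpoints do not span four different parts simply contribute $0$ to $N$ instead of $(n/r)^{r-4}$, but this only weakens the upper bound on $N$ in the correct direction and so preserves the inequality. A more refined argument that tracks how the crossings of $D$ distribute across the parts could in principle sharpen the bound, but the clean form $\rcrs(K_r) \cdot (n/r)^4$ follows immediately from this plain counting.
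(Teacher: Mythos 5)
Your proposal is correct and is essentially the same argument the paper gives: both average over the $(n/r)^r$ selections of one vertex per part, lower-bound each induced subdrawing by $\rcrs(K_r)$, and observe that each crossing is counted at most $(n/r)^{r-4}$ times. Your write-up is just a more careful, fully spelled-out version of the paper's three-line proof.
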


Theorems~\ref{lem:upper_rect} and~\ref{lem:lower_rect} imply the following.
\begin{restatable}{restcor}{corolrestate}\label{cor:lime}
Let $r=r(n)$ be a monotone increasing function of $n$ such that $r \to \infty$ as $n \to \infty$.
 Then
\[\lim_{n \to \infty} \frac{\rcrs(K_n^r)}{\binom{n}{4}} = \overline {q}.\]
\end{restatable}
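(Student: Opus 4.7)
The plan is to apply the squeeze theorem to the bounds from Theorems~\ref{lem:upper_rect} and~\ref{lem:lower_rect}, using the known asymptotic $\rcrs(K_r)/\binom{r}{4} \to \overline{q}$ as $r \to \infty$.

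First I would handle the upper bound. Dividing the inequality of Theorem~\ref{lem:upper_rect} by $\binom{n}{4}$ and using $\binom{n}{4} = n^4/24 + O(n^3)$, we obtain
\[
\frac{\rcrs(K_n^r)}{\binom{n}{4}} \le \overline{q}\left(\frac{r-1}{r}\right)^2 + o(1).
\]
Since $r = r(n) \to \infty$, the factor $\left(\frac{r-1}{r}\right)^2 \to 1$, and so the limsup of the left-hand side is at most $\overline{q}$. Note that the $o(1)$ term here depends on $n$ (and $r$), but a careful inspection of the proof of Theorem~\ref{lem:upper_rect} should show that it is bounded uniformly in $r$, which is what is needed.

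Next I would handle the lower bound. Dividing Theorem~\ref{lem:lower_rect} by $\binom{n}{4}$ gives
\[
\frac{\rcrs(K_n^r)}{\binom{n}{4}} \ge \frac{\rcrs(K_r)}{\binom{r}{4}}\cdot \frac{\binom{r}{4}\,(n/r)^4}{\binom{n}{4}}.
\]
By the Scheinerman--Wilf result~\cite{wilf}, the first factor tends to $\overline{q}$ as $r \to \infty$. For the second factor, both $\binom{r}{4}/r^4$ and $\binom{n}{4}/n^4$ tend to $1/24$ as $r, n \to \infty$, so the ratio tends to $1$. Hence the liminf of the left-hand side is at least $\overline{q}$.

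Combining the two bounds yields the claimed limit. I do not anticipate a real obstacle; the only subtle point is to verify that the error terms in Theorem~\ref{lem:upper_rect} are genuinely $o(\binom{n}{4})$ uniformly as $r$ grows with $n$, which follows because those error terms come from $O(n^3)$ contributions in Hill-type and random-embedding estimates that do not depend on $r$.
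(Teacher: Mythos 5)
Your proposal is correct and follows essentially the same route as the paper: a squeeze between Theorem~\ref{lem:lower_rect} (combined with $\rcrs(K_r)/\binom{r}{4}\to\overline{q}$ from Scheinerman--Wilf) for the lower bound and Theorem~\ref{lem:upper_rect} together with $\left(\frac{r-1}{r}\right)^2<1$ for the upper bound. Your remark about checking that the $o(n^4)$ error in Theorem~\ref{lem:upper_rect} is uniform in $r$ is a reasonable extra care that the paper glosses over, but it does not change the argument.
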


In both~\cite{harborth_rp} and~\cite{gethner2017crossing}, it is conjectured that 
\[\crs \left (K_n^3 \right )=\rcrs \left (K_n^3 \right ).\]
Using the order type database~\cite{otype}, we have verified that 
\[\rcrs \left ( K_8^4 \right )=8 
\textrm{ and }
\rcrs \left ( K_9^4 \right )=15. \]
On the other hand 
\[\crs \left (K_8^4 \right ) \le H(8,4)=6 \textrm{ and }
\crs \left (K_9^4 \right ) \le  H(9,4)=15.\]

See Figure~\ref{fig:Hnrn} for an example.
\begin{figure}[t]
	\centering
	\includegraphics[]{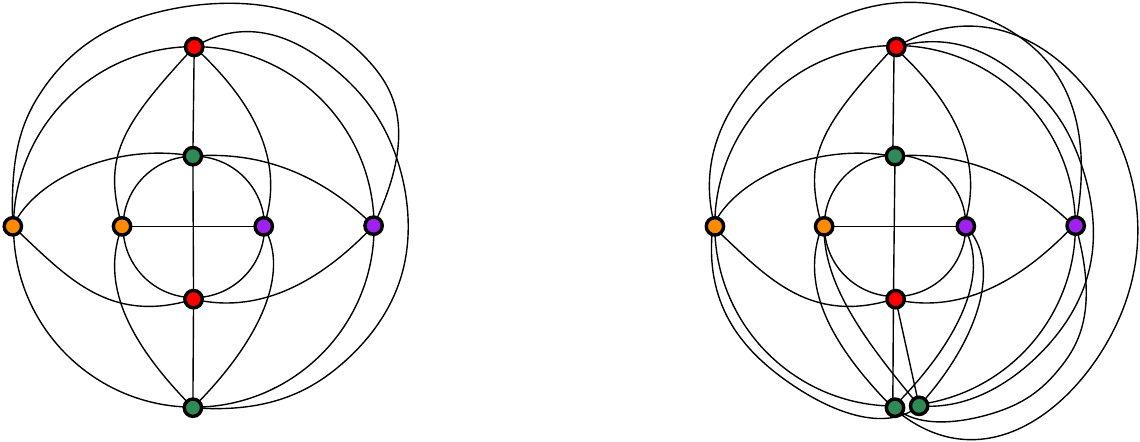}
	\caption{A drawing of $K_8^4$ with 6 crossings (left) and $K_9^4$ with 15 crossings (right).} 
	\label{fig:Hnrn}
\end{figure}
From the above results we conjecture the following.
\begin{conj}
There exists a natural number $n_0 > 9$ such that for all $n \ge n_0$,
\[\crs  \left (K_n^4 \right ) < \rcrs \left ( K_n^4 \right ).\]
\end{conj}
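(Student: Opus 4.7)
The obvious route is a sandwich: combine Harborth's bound $\crs(K_n^4)\le H(n,4)=\tfrac{9}{1024}n^4+O(n^3)$ with a rectilinear lower bound strictly larger than $H(n,4)$ for all sufficiently large $n$, and finish with a direct enumeration of small cases to pin down $n_0$. Since Lemma~\ref{lem:Hnr} already supplies the clean asymptotic upper bound on $\crs(K_n^4)$, the whole task reduces to producing a matching strict lower bound on $\rcrs(K_n^4)$.

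My plan is to obtain that lower bound by sharpening the averaging used in Theorem~\ref{lem:lower_rect}. Picking $m/4$ vertices uniformly at random from each part of $K_n^4$, the expected number of surviving crossings in the induced rectilinear drawing of $K_m^4$ is $\alpha X_{(2,2,0,0)}+\beta X_{(2,1,1,0)}+\gamma X_{(1,1,1,1)}$, where the three classes record how the four endpoints of a crossing distribute across the parts and the weights satisfy $\alpha<\beta<\gamma\sim(m/n)^4$. Theorem~\ref{lem:lower_rect} amounts to replacing every weight by the largest one $\gamma$; this is exactly what makes it trivial when $m=4$. I would instead (i) compute $\rcrs(K_m^4)$ for values of $m$ divisible by $4$ larger than the currently known $m=8$, and (ii) combine the resulting inequality with auxiliary upper bounds on $X_{(2,2,0,0)}$ and $X_{(2,1,1,0)}$ obtained by a second averaging over the $K_{2,2}$- and $K_{2,1,1}$-subgraph counts of the same random subset. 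Feeding the resulting linear program into an asymptotic estimate should yield a bound of the form $\rcrs(K_n^4)\ge\left(\tfrac{9}{1024}+\varepsilon\right)n^4+o(n^4)$ for some $\varepsilon>0$, at which point the conjecture follows for all large $n$, and an order-type enumeration in the small range fixes the precise~$n_0$.

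The main obstacle is producing the witness $m$ required in step~(i). The values $\rcrs(K_8^4)=8$ and $\rcrs(K_9^4)=15$ correspond to densities $\rcrs(K_m^4)/m^4\approx 0.00195$ and $0.00229$ respectively, both well below the target $\tfrac{9}{1024}\approx 0.00879$; a brute-force order-type enumeration must therefore reach at least $m=12$, at which point even the Harborth upper bound $H(m,4)/m^4$ is only just climbing toward $\tfrac{9}{1024}$, so the margin for error is tiny. Absent an accessible computational witness, the remaining avenue is a direct asymptotic lower bound on $\rcrs(K_n^4)$ adapted from the $k$-edge/allowable-sequence machinery of \'{A}brego, Fern\'{a}ndez-Merchant, Lea\~{n}os and Salazar~\cite{aproach} used to lower-bound $\rcrs(K_n)$; porting those identities, which relate crossings to halving-$k$-edges of the entire point set, so as to count only inter-part crossings of a fixed $4$-partition is the place where I expect genuinely new ideas to be needed.
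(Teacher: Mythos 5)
The statement you are asked about is a \emph{conjecture} in the paper, not a theorem: the authors offer no proof, only the computational evidence that $\rcrs(K_8^4)=8$ while $\crs(K_8^4)\le H(8,4)=6$ (and the borderline case $\rcrs(K_9^4)=15=H(9,4)$). Your proposal is likewise not a proof but a research program, and you say so yourself; so there is nothing to certify here. The concrete gap is exactly the one you identify: a lower bound on $\rcrs(K_n^4)$ strictly exceeding $H(n,4)=\tfrac{9}{1024}n^4+O(n^3)$. Neither averaging step you describe currently delivers it. Step (i) requires $\rcrs(K_m^4)$ for some $m$ well beyond the reach of the order type database (which is what the paper used for $m=8,9$), and the naive counting bound from $m=8$ gives only about $0.00195\,n^4$, a factor of $4.5$ short of the target. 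Step (ii), the LP refinement over the classes $X_{(2,2,0,0)}$, $X_{(2,1,1,0)}$, $X_{(1,1,1,1)}$, is a sensible flag-algebra-flavoured idea, but you give no reason to believe its output lands above $\tfrac{9}{1024}$ rather than merely improving the trivial bound.

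It is worth quantifying how thin the margin is, because it explains why the authors stopped at a conjecture. The paper's best rectilinear drawing of $K_n^4$ (planted from the $K_{24}^4$ seed) has $\tfrac{2951}{24^4}n^4-O(n^3)\approx 0.0088946\,n^4$ crossings, while $\tfrac{9}{1024}\approx 0.0087891$. So the conjectured separation, if it exists asymptotically, is at most about $1.2\%$ of the leading coefficient, and any averaging or $k$-edge lower bound would have to be correspondingly tight --- comparable to the precision of the \'Abrego--Fern\'andez-Merchant--Lea\~nos--Salazar machinery for $K_n$ itself, whose adaptation to a fixed $4$-partition you correctly flag as the place where new ideas are needed. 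One small simplification to your plan: the conjecture only asserts the existence of \emph{some} $n_0$, so the final ``order-type enumeration in the small range to fix the precise $n_0$'' is unnecessary; an asymptotic strict inequality alone would suffice.
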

%

\subsection*{Layered Graphs}

Using the random embedding technique into Hill's drawing of~$K_n$, we obtain the following upper bound for~$\crs (L_n^r)$.
\begin{restatable}{theorem}{randomlayers}\label{thm:layers_random}
    \[\crs(L_n^r) \le \frac{(r-1)^2}{16 r^4}n^4+O(n^3).\]
\end{restatable}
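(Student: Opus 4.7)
The plan is to mirror the proof of Theorem~\ref{thm:Knr_gen}: apply Equation~\ref{eq:random_embeding} to $G = L_n^r$ with $D'$ taken to be Hill's drawing of $K_n$. Setting $s := n/r$, I first read off the two structural parameters of $L_n^r$. Since each pair of consecutive layers contributes $s^2$ edges and there are $r-1$ such pairs, $\|L_n^r\| = (r-1)s^2$. The degree sequence is simple: the $2s$ vertices in the two outermost layers have degree $s$, while the $(r-2)s$ vertices in the interior layers have degree $2s$.

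Next I substitute these into Equation~\ref{eq:random_embeding}, so that the combinatorial factor becomes
\[
\binom{(r-1)s^2}{2} - 2s\binom{s}{2} - (r-2)s\binom{2s}{2} \;=\; \frac{(r-1)^2 n^4}{2r^4} + O(n^3),
\]
where the $O(n^3)$ error is uniform in $r \ge 2$, since $(2r-3)/r^3$ and $(r-1)/r^2$ are both bounded above by absolute constants. For the prefactor, the elementary estimate
\[
H(n) \;\le\; \frac{n(n-1)(n-2)(n-3)}{64} \;=\; \frac{3}{8}\binom{n}{4}
\]
gives $H(n)/(3\binom{n}{4}) \le 1/8$. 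Multiplying the two estimates,
\[
E(\crs(D)) \;\le\; \frac{1}{8}\!\left( \frac{(r-1)^2 n^4}{2r^4} + O(n^3) \right) \;=\; \frac{(r-1)^2}{16r^4}n^4 + O(n^3).
\]
Since some outcome of the random embedding must attain the expectation, we get $\crs(L_n^r) \le E(\crs(D))$, which is the desired bound.

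The argument is essentially a direct plug-in into the machinery already developed in Section~\ref{sec:random}, so there is no substantive obstacle. The only place that requires a sanity check is the uniform absorption of lower-order terms: a one-line computation shows $\sum_v \binom{d(v)}{2} = (2r-3)s^3 - (r-1)s^2$, which is bounded by $n^3/2$ for $r \ge 2$, and similarly the $-\binom{(r-1)s^2}{1}/2$ piece of $\binom{\|L_n^r\|}{2}$ contributes only $O(n^2)$. With these bounds in hand the calculation goes through cleanly and the leading constant $(r-1)^2/(16r^4)$ emerges naturally.
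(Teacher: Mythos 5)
Your proposal is correct and follows essentially the same route as the paper: it applies Equation~\ref{eq:random_embeding} with Hill's drawing, computes $\binom{\|L_n^r\|}{2}-\sum_v\binom{d(v)}{2}$ exactly as in Proposition~\ref{prop:layer}, and bounds the prefactor $H(n)/(3\binom{n}{4})$ by $1/8$ (a marginally cruder but fully sufficient version of Lemma~\ref{lem:H}). The only cosmetic quibble is that the averaging step should read ``some outcome achieves at most the expectation'' rather than ``attains'' it.
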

We improve this upper bound in Section~\ref{sec:planted}.

\section{Planted Rectilinear Drawings}\label{sec:planted}

Let $D$ be a rectilinear drawing of a graph $G$. For every vertex $v$ of $D$, let
$\ell_v$, be a directed straight line passing through $v$ and no other vertex of $D$, such that
the left halplane of $\ell_v$ contains $\lfloor d(v)/2 \rfloor$ neighbors of $v$ and the right halfplane of $\ell_v$
contains the remaining $\lceil d(v)/2 \rceil$ neighbors of $v$.
Let $G^s$ be the graph whose vertex set is equal to
\[\{(v,i):i=1,\dots,s \textrm{ and } v \in V(G)\},\]
and in which $(v,i)$ is adjacent to $(w,j)$ whenever
$vw$ is an edge of $G$.  We say that the set
$\{(v,1),\dots,(v,s)\}$ is the \emph{cluster} of $v$.
Let $D^s$ be the rectilinear drawing of $G^s$ in which for every vertex $v$ of $G$, the vertices of  cluster
are placed arbitrarily close to $\ell_v$ and arbitrarily close to $v$ (in $D$).
We say that $D^s$ is a \emph{planted drawing} of $G^s$ with \emph{seed} $D$.

\begin{restatable}{restlemma}{lemseed}\label{lem:seed}
 \[\crs(D^s)=\crs(D)s^4+\sum_{v \in V(G)} \left ( \binom{ \lfloor d(v)/2 \rfloor}{2}+ \binom{\lceil d(v)/2 \rceil}{2}\right)\frac{s^3(s-1)}{2}+||G||\frac{s^2(s-1)^2}{4},\]
where we follow the standard convention that $\binom{n}{m}=0$ when $n < m$.
 \end{restatable}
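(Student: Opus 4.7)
The plan is to partition the unordered pairs of distinct edges of $D^s$ according to their footprint in $G$: each edge $(v,i)(w,j)$ of $D^s$ has footprint $vw \in E(G)$. Every pair of edges of $D^s$ falls into one of three classes: (A) the two footprints are independent edges of $G$; (B) the two footprints share exactly one vertex; (C) the two footprints coincide. I count the crossings contributed by each class separately, using that the clusters are arbitrarily close to their parent vertices, so that each crossing in $D^s$ can be read off from the local geometry at the clusters together with the crossings of $D$.

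For class (A), each pair of independent edges $\{vw, xy\}$ of $G$ gives exactly $s^4$ pairs of edges in $D^s$, one per choice of cluster index at each of the four distinct endpoints, and by continuity each such pair crosses in $D^s$ iff $vw$ crosses $xy$ in $D$. This class contributes $\crs(D)\, s^4$. For class (C), fix an edge $vw$ of $G$. The $s^2$ edges of $D^s$ between the clusters of $v$ and $w$ form a rectilinear $K_{s,s}$ with its two parts on the distinct lines $\ell_v$ and $\ell_w$. A standard calculation (for any independent pair of edges, the four endpoints are in convex position, and exactly one of the two possible bipartite matchings into independent edges is the pair of diagonals) yields $\binom{s}{2}^2 = \tfrac{s^2(s-1)^2}{4}$ crossings per edge; summing over the $||G||$ edges of $G$ gives the third term.

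Class (B) is the main obstacle. Fix a vertex $v$ and distinct neighbors $b, d$ of $v$ in $G$, and consider a pair of edges $(v,i)(b,j)$ and $(v,k)(d,l)$; since $b \ne d$ already separates the non-$v$ endpoints, independence reduces to $i \ne k$. Because the clusters at $b$ and $d$ are arbitrarily small, by continuity it suffices to decide whether the segments from $(v,i)$ and $(v,k)$ to the actual points $b$ and $d$ cross, and this is independent of $j$ and $l$. A short case analysis at $v$ shows that if $b$ and $d$ lie in opposite half-planes of $\ell_v$, the two segments are contained in opposite closed half-planes and meet only possibly on $\ell_v$, where they have disjoint endpoints, so they do not cross; if $b$ and $d$ lie in the same half-plane of $\ell_v$, then the four endpoints are in convex position with both $(v,i), (v,k)$ on one side of the hull, and exactly one of the two orderings of $(v,i), (v,k)$ along $\ell_v$ produces the crossing configuration. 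Hence each same-side pair $\{b,d\}$ at $v$ contributes $\tfrac{s(s-1)}{2} \cdot s^2 = \tfrac{s^3(s-1)}{2}$ crossings (half of the $s(s-1)$ ordered pairs $(i,k)$ with $i \ne k$, times $s^2$ free choices of $(j, l)$). By the defining property of $\ell_v$, the number of same-side pairs at $v$ equals $\binom{\lfloor d(v)/2 \rfloor}{2} + \binom{\lceil d(v)/2 \rceil}{2}$; summing over $v$ gives the second term, and adding the contributions of the three classes yields the claimed formula.
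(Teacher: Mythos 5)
Your proposal is correct and follows essentially the same route as the paper: classifying independent edge pairs of $D^s$ by whether their endpoints lie in four, three, or two clusters (equivalently, by the relation between their footprints in $G$), and counting $\crs(D)s^4$, $\binom{s}{2}s^2$ per same-side neighbor pair at each vertex, and $\binom{s}{2}^2$ per edge, respectively. Your justifications for the ``exactly one of the two matchings crosses'' steps are slightly more detailed than the paper's, but the decomposition and the counts are identical.
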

Seeds and planted drawings\ were first used by Ábrego and Fernández-Merchant~\cite{bersil}\footnote{They do it in a different way as presented here; first they duplicate each vertex
along halving lines; then they choose halving lines for the original and new vertices and duplicate a new. They iterate this process.} to upper bound the rectilinear
crossing number of $K_n$. 
The current best upper bound on $\rcrs(K_n)$ is obtained via a seed of 2643 vertices and 771218714414 crossings.

\subsection*{Complete Balanced $r$-partite Graphs}
Note that if we use $K_{tr}^r$ as a seed for a planted drawing of $K_n^r$, we have that $s=\frac{n}{tr}$.
Thus, from Lemma~\ref{lem:seed}  we obtain the following.
\begin{restatable}{restcor}{corseed}\label{cor:seed}
Let $D$ be a rectilinear drawing of $K_{tr}^r$. Then using $D$ as a seed
 we obtain a planted drawing of $K_n^r$ with
 \[\left (\frac{\crs(D)+\frac{rt}{2}\left(\binom{\lfloor (r-1)t/2 \rfloor}{2}+\binom{\lceil (r-1)t/2 \rceil}{2}\right ) +\frac{r(r-1)t^2}{8}}{(rt)^4} \right )n^4-O(n^3) \]
 crossings.
\end{restatable}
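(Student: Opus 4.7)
The plan is to apply Lemma~\ref{lem:seed} directly to $G := K_{tr}^r$ with cluster size $s := n/(tr)$ and then simplify asymptotically. First I would verify that the planted graph $G^s$ coincides with $K_n^r$: two copies $(v,i)$ and $(w,j)$ are adjacent in $G^s$ iff $vw\in E(G)$, i.e., iff $v$ and $w$ lie in different parts of $K_{tr}^r$. Gathering the $s$ clusters arising from a common part yields $r$ groups of $ts=n/r$ vertices with every edge between distinct groups present, which is precisely $K_n^r$.

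Next I would read off the parameters needed in Lemma~\ref{lem:seed}. Every vertex of $K_{tr}^r$ has the same degree $(r-1)t$, so each of the $rt$ vertices contributes identically to the degree sum, giving a total of $rt\bigl(\binom{\lfloor (r-1)t/2\rfloor}{2}+\binom{\lceil (r-1)t/2\rceil}{2}\bigr)$, and $||G||=\binom{r}{2}t^2$. Plugging these into Lemma~\ref{lem:seed} yields
\[
\crs(D^s)=\crs(D)\,s^4+rt\!\left(\binom{\lfloor (r-1)t/2\rfloor}{2}+\binom{\lceil (r-1)t/2\rceil}{2}\right)\!\frac{s^3(s-1)}{2}+\binom{r}{2}t^2\,\frac{s^2(s-1)^2}{4}.
\]

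Finally I would substitute $s=n/(tr)$, noting that $s^4=n^4/(rt)^4$ while $s^3(s-1)=s^4-O(s^3)$ and $s^2(s-1)^2=s^4-O(s^3)$. Since $r$ and $t$ are treated as constants, $s^3=O(n^3)$, so these lower-order discrepancies are absorbed into a single $O(n^3)$ term. Collecting the coefficient of $n^4$ and using $\binom{r}{2}/4=r(r-1)/8$ produces exactly the stated expression. There is no deep obstacle in this argument; the main point requiring care is the bookkeeping that identifies $G^s$ with $K_n^r$ and correctly tracks the sub-leading corrections into the $O(n^3)$ remainder.
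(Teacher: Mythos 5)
Your proposal is correct and follows exactly the route the paper takes: the corollary is obtained by plugging the parameters of $K_{tr}^r$ (uniform degree $(r-1)t$, edge count $\binom{r}{2}t^2$) and $s=n/(tr)$ into Lemma~\ref{lem:seed} and absorbing the sub-leading terms into $O(n^3)$. Your explicit check that $G^s$ is indeed $K_n^r$ is a small piece of bookkeeping the paper leaves implicit, but it is the same argument.
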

Using the seeds in Figure~\ref{fig:seeds}, we obtain planted rectilinear drawings
of $K_n^2$ and $K_n^3$, with the conjectured minimum number of crossings.
\begin{figure}
\centering
\includegraphics[width=0.55 \linewidth]{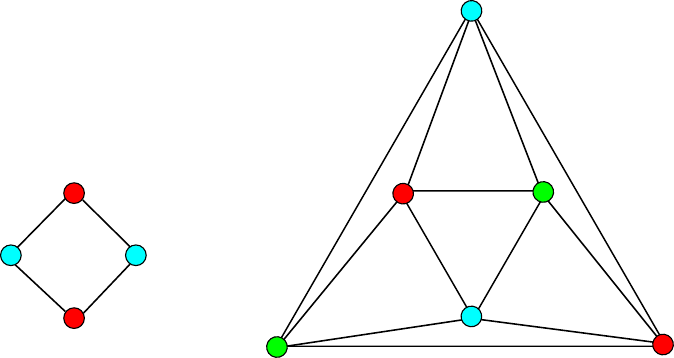}
\caption{The seeds for the planted drawings of $K_n^2$ and $K_n^3$}
\label{fig:seeds}
\end{figure}

Using the random embedding technique and Theorem~\ref{thm:Knr_gen} we obtain
a rectilinear drawing of $K_n^4$ with at most
\begin{equation}\label{eq:upper}
0.0089676n^4+o(n^4)
\end{equation}
crossings; and since $\overline{q} > 0.379972$, the best we can hope to achieve with
the random embedding technique is a rectilinear drawing of $K_n^4$ with
\begin{equation}\label{eq:lower}
 0.0089055n^4+o(n^4)
\end{equation}
crossings. 

Using a planar drawing of $K_4$ as a seed, we obtain a rectilinear planted
 drawing of $K_n^4$ (in this case $r=4$ and $t=1$) with 
 \[\left( \frac{2 \left ( \binom{1}{2}+\binom{2}{2}\right)+\frac{3}{2}}{4^4}\right)n^4-O(n^3)
  =\frac{7}{2^9}n^4-O(n^3) 
  =0.013671875 n^4-O(n^3)\]
 crossings.
 Using a rectilinear drawing of $K_8^4$ with $8$ crossings as a seed, we obtain a
 planted rectilinear drawing of $K_n^4$ with
 \[\left( \frac{8+4 \left( \binom{3}{2}+\binom{3}{2}\right)+6}{8^4}\right)n^4-O(n^3)=\frac{38}{8^4}n^4-O(n^3)=0.009277344n^4-O(n^3)\]
 crossings.
 
 Fabila-Monroy and L\'opez~\cite{smallsets} used an heuristic of randomly moving
 vertices to obtain a rectilinear drawing of $K_{75}$ with $45049$ crossings. This was
 used as a seed for a previous best upper bound on $\overline{q}$. In~\cite{crucenuevo} 
 Duque, Fabila-Monroy, Hern\'andez-V\'elez and Hidalgo-Toscano gave an $O(n^2 \log n)$ time algorithm
 to compute the crossing number of a rectilinear drawing of a graph on $n$ vertices. Using a similar heuristic
 as in~\cite{smallsets} and the algorithm of~\cite{crucenuevo}, we obtained
 a rectilinear drawing of $K_{24}^4$ with $2033$ crossings. Using this as a seed we obtain
 a planted rectilinear drawing of $K_n^4$ with
 \[ \left( \frac{2033+12\left(\binom{9}{2}+\binom{9}{2}\right)+54}{24^4}\right)n^4-O(n^3)
  =\frac{2951}{24^4}n^4-O(n^3)
  =0.0088946n^4-O(n^3)\]
 crossings. This is better than the best possible upper bound
 obtainable with the random embedding technique.
 However, for $r \ge 5$, we have not found seeds that provide planted drawings with less crossings than the drawings
 obtained from the random embedding technique.
 
\subsection*{Layered Graphs}

We now show a rectilinear planar drawing $D_r$ of $L_{2r}^r$.
For $i=1,\dots,r$, let $\{u_i,v_i\}$ be the two vertices on layer $i$ of $L_{2r}^r$.
Place $u_i$ and $v_i$ at the points $p_i$ and $q_i$, respectively; where
\[p_i:=
\begin{cases}
  (i,0) \textrm{ if } i \textrm{ is odd, } \\
 (0,i) \textrm{ if } i \textrm{ is even,}
\end{cases}
\textrm{ and } \ 
q_i:=
\begin{cases}
  (-i,0) \textrm{ if } i \textrm{ is odd, } \\
 (0,-i) \textrm{ if } i \textrm{ is even.}
\end{cases}\]
See Figure~\ref{fig:layer} for the drawing of $L_{12}^6$.
\begin{figure}
\centering
\includegraphics[width=0.35 \linewidth]{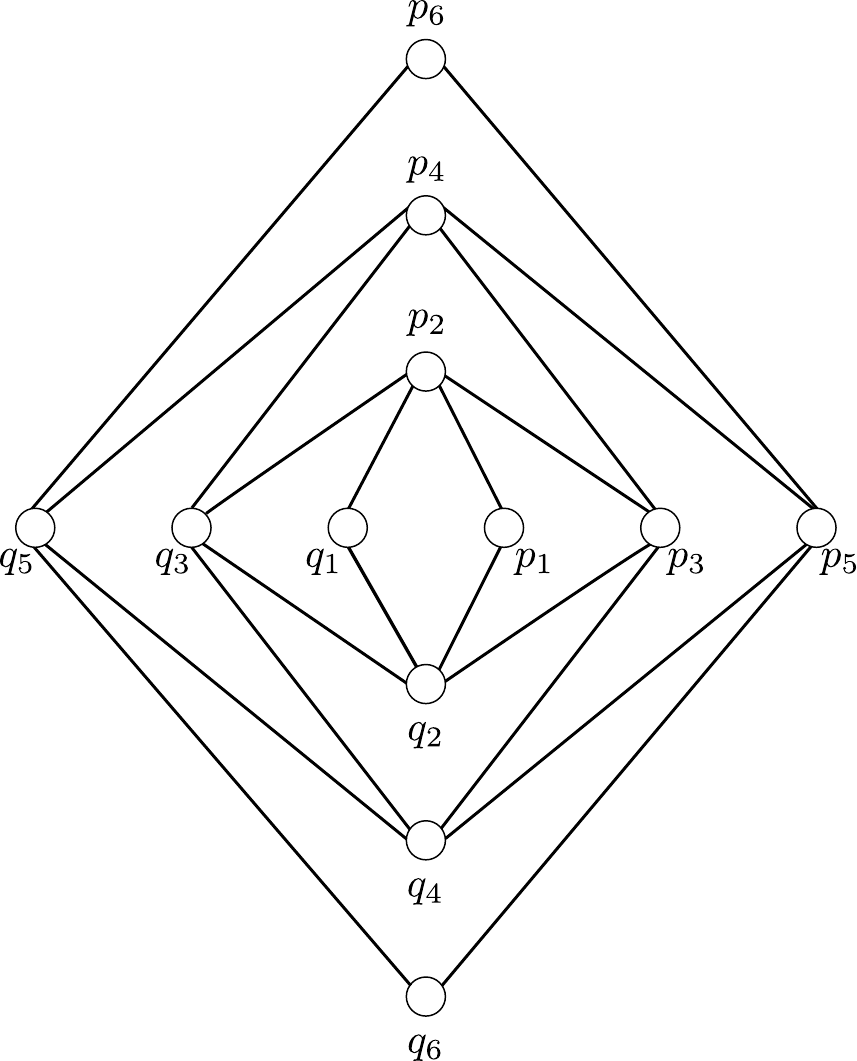}
\caption{The rectilinear $D_{6}$ drawing of $L_{12}^6$}
\label{fig:layer}
\end{figure}

Using this drawing as a seed for a planted drawing of $L_n^r$, we obtain
a rectilinear drawing with
\begin{align*}
 &\sum_{v \in V(D_r)} \left ( \binom{ \lfloor d(v)/2 \rfloor}{2}+ \binom{\lceil d(v)/2 \rceil}{2}\right)\frac{s^4}{2}+||D_r||\frac{s^4}{4}-O(s^3)\\
 &= (2(r-2)\cdot 2)\frac{n^4}{2\cdot (2r)^4}+4\cdot (r-1)\frac{n^4}{4\cdot (2r)^4}-O(n^3)\\
 &=\frac{3r-5}{16 r^4}n^4-O(n^3)
\end{align*}
crossings.
For $r \ge 4$, this is better than the upper bound obtained with the random embedding technique.

For $i=2,\dots,r-1$, let $H_i$ be the subgraph of $L_n^r$ induced by the vertices
in layers $i-1,i$ and $i+1$. Note that this graph is isomorphic to $K_{n/r,2n/r}$.
Thus, assuming that Zarankiewicz's conjecture holds, in every drawing of $L_n^r$, $H_i$
produces at least $Z(n/r,2n/r)$ crossings. Each of these crossings is produced
by at most two such $H_i$'s. Therefore, assuming that Zarankiewicz's conjecture is true, we have that
\[ \crs(L_n^r) \ge \frac{(r-2)}{2}Z(n/r,2n/r)=\frac{2r-4}{16r^4}n^4-O(n^3). \]



{\small
\bibliographystyle{abbrv}
\bibliography{crossing_number_beyond_Kn}}
\newpage

\onecolumn

\section{Appendix}
Let $D$ be a rectilinear drawing of $K_n$.
For $0 \le j \le n-2$ an \emph{$j$-edge} is an ordered pair $(p,q)$ of 
vertices of $D$, such that there are exactly $j$ vertices of $D$ to the left of the directed
straight line from $p$ to $q$. Let $e_j(D)$ be the number of $j$-edges of $D$. For every $0 \le k \le n-2$, let  
$E_k(S):=\sum_{j=0}^k e_j(S)$. The following equality was shown independently by Lov\'asz, Vesztergombi, Wagner and Welzl~\cite{lovaz},
and Ábrego and Fernández-Merchant~\cite{cotainferior}. 
\begin{equation}\label{eq:cr_kedges}\rcrs(D)=\sum_{k<\frac{n-2}{2}} E_k(n-2k-3)-\frac{3}{4}\binom{n}{3}+c_n \end{equation}
where 
\[c_n= \begin{cases} \frac{1}{4}E_{\frac{n-3}{2}}  &\textrm{ if } n \textrm{ is odd,} \\ 0   &\textrm{ if } n \textrm{ is even.}\end{cases}\]
Thus, lower bounds on $E_k$ provide lower bounds of $\rcrs(K_n)$. Aichholzer, Garc\'ia, Orden and Ramos~\cite{aichholzernew} showed that for every $0 \le k \le \lfloor (n-2)/2 \rfloor$, we have that
\begin{equation}\label{eq:lower_E} E_k(S) \ge 3\binom{k}{2}+\sum_{j=\lfloor n/3\rfloor}^k (3j-n+3).\end{equation}

\begin{proof}[Proposition \ref{prop:crs<rcrs}]
For $n=10,\dots,161$, the result can be verified by comparing $H(n)$ with the lower bound on $\rcrs(K_n)$ given by Equations~\ref{eq:cr_kedges} and~\ref{eq:lower_E}.
We show these values on Table~\ref{tab:values}\footnote{We point out that many of these are not the best lower bounds known; however, they are sufficient for our purposes.}.

Let $n>162$, and let $D$ be a rectilinear drawing $K_n$. For every vertex $p$ of $D$ consider the rectilinear drawing
of $K_{n-1}$ produced by removing $p$ from $D$. There are at least $\rcrs(K_{n-1})$ crossings in this drawing. 
Every crossing of $D$ is counted $n-4$ times in this way. Therefore,
\begin{align*} 
\rcrs(K_n) & \ge \frac{n}{n-4} \rcrs(K_{n-1}) \\ 
& \ge \frac{n}{n-4}\cdot \frac{n-1}{n-5} \cdot \frac{n-2}{n-6} \cdot \frac{n-3}{n-7} \cdots  \frac{162}{158} \cdot \frac{161}{157}\cdot \frac{160}{156}\cdot \frac{159}{155} \cdot\rcrs(K_{158})\\
&=n\cdot(n-1) \cdot (n-2) \cdot (n-3) \cdot \left ( \frac{1}{158} \cdot \frac{1}{157} \cdot \frac{1}{156} \cdot  \frac{1}{155} \right )\cdot\rcrs(K_{158})\\
&\ge \frac{9372519}{599809080} \cdot n\cdot(n-1) \cdot (n-2) \cdot (n-3) \cdot \left ( \frac{1}{158} \cdot \frac{1}{157} \cdot \frac{1}{156} \cdot  \frac{1}{155} \right ) \\
& = 0.015625837\cdot n\cdot(n-1) \cdot (n-2) \cdot (n-3).
\end{align*}
If $n$ is even, then
\begin{align*} 
H(n)&=\frac{1}{64} \cdot n \cdot (n-2) \cdot (n-2) \cdot (n-4) \\
 &= \frac{1}{64} \left( \frac{(n-2)(n-4)}{(n-1)(n-3)} \right)\cdot n \cdot (n-2) \cdot (n-2) \cdot (n-4)\\
 & < \frac{1}{64} \cdot n \cdot (n-2) \cdot (n-2) \cdot (n-4) \\
 &=0.015625 \cdot  n \cdot (n-2) \cdot (n-2) \cdot (n-4); \\
\end{align*}
and if $n$ is odd, then
\begin{align*} 
H(n)&=\frac{1}{64} \cdot (n-1) \cdot (n-1) \cdot (n-3) \cdot (n-3) \\
 &= \frac{1}{64} \left( \frac{(n-1)(n-3)}{n(n-2)} \right)\cdot n \cdot (n-2) \cdot (n-2) \cdot (n-4)\\
 & < \frac{1}{64} \cdot n \cdot (n-2) \cdot (n-2) \cdot (n-4) \\
 &=0.015625 \cdot  n \cdot (n-2) \cdot (n-2) \cdot (n-4). \\
\end{align*}
Therefore, \[ \rcrs(K_n) > \crs(K_n),\]
for all $n \ge 10$. 
\end{proof}
\begin{table}[]
\begin{tabular}{|c|c|c|c|c|c|c|c|c|c|c|c|}
\hline
$n$ & $H(n)$ & $cr(n) \ge $ & $n$ & $H(n)$ & $cr(n) \ge$  & $n$ & $H(n)$ & $cr(n) \ge$  & $n$ & $H(n)$ & $cr(n) \ge$\\ \hline
10 & 60 & 62 & 48 & 69828 & 70836 & 86 & 777483 & 788053 & 124 & 3460530 & 3506170  \\ \hline
11 & 100 & 101 & 49 & 76176 & 77224 & 87 & 815409 & 826182 & 125 & 3575881 & 3622541  \\ \hline
12 & 150 & 153 & 50 & 82800 & 84012 & 88 & 854238 & 865823 & 126 & 3693123 & 3741633  \\ \hline
13 & 225 & 227 & 51 & 90000 & 91212 & 89 & 894916 & 906802 & 127 & 3814209 & 3863939  \\ \hline
14 & 315 & 323 & 52 & 97500 & 98916 & 90 & 936540 & 949140 & 128 & 3937248 & 3989069  \\ \hline
15 & 441 & 444 & 53 & 105625 & 107073 & 91 & 980100 & 993099 & 129 & 4064256 & 4117056  \\ \hline
16 & 588 & 601 & 54 & 114075 & 115695 & 92 & 1024650 & 1038490 & 130 & 4193280 & 4248412  \\ \hline
17 & 784 & 794 & 55 & 123201 & 124885 & 93 & 1071225 & 1085337 & 131 & 4326400 & 4382731  \\ \hline
18 & 1008 & 1026 & 56 & 132678 & 134583 & 94 & 1118835 & 1133915 & 132 & 4461600 & 4520043  \\ \hline
19 & 1296 & 1313 & 57 & 142884 & 144804 & 95 & 1168561 & 1184025 & 133 & 4601025 & 4660887  \\ \hline
20 & 1620 & 1652 & 58 & 153468 & 155658 & 96 & 1219368 & 1235688 & 134 & 4742595 & 4804833  \\ \hline
21 & 2025 & 2049 & 59 & 164836 & 167081 & 97 & 1272384 & 1289200 & 135 & 4888521 & 4951914  \\ \hline
22 & 2475 & 2521 & 60 & 176610 & 179085 & 98 & 1326528 & 1344344 & 136 & 5036658 & 5102691  \\ \hline
23 & 3025 & 3067 & 61 & 189225 & 191795 & 99 & 1382976 & 1401144 & 137 & 5189284 & 5256714  \\ \hline
24 & 3630 & 3690 & 62 & 202275 & 205135 & 100 & 1440600 & 1459912 & 138 & 5344188 & 5414016  \\ \hline
25 & 4356 & 4416 & 63 & 216225 & 219120 & 101 & 1500625 & 1520417 & 139 & 5503716 & 5575183  \\ \hline
26 & 5148 & 5238 & 64 & 230640 & 233885 & 102 & 1561875 & 1582683 & 140 & 5665590 & 5739742  \\ \hline
27 & 6084 & 6162 & 65 & 246016 & 249346 & 103 & 1625625 & 1647041 & 141 & 5832225 & 5907729  \\ \hline
28 & 7098 & 7218 & 66 & 261888 & 265518 & 104 & 1690650 & 1713243 & 142 & 6001275 & 6079751  \\ \hline
29 & 8281 & 8397 & 67 & 278784 & 282549 & 105 & 1758276 & 1781316 & 143 & 6175225 & 6255317  \\ \hline
30 & 9555 & 9705 & 68 & 296208 & 300344 & 106 & 1827228 & 1851606 & 144 & 6351660 & 6434460  \\ \hline
31 & 11025 & 11179 & 69 & 314721 & 318921 & 107 & 1898884 & 1923853 & 145 & 6533136 & 6617816  \\ \hline
32 & 12600 & 12805 & 70 & 333795 & 338437 & 108 & 1971918 & 1998081 & 146 & 6717168 & 6804868  \\ \hline
33 & 14400 & 14592 & 71 & 354025 & 358791 & 109 & 2047761 & 2074659 & 147 & 6906384 & 6995652  \\ \hline
34 & 16320 & 16580 & 72 & 374850 & 379998 & 110 & 2125035 & 2153307 & 148 & 7098228 & 7190828  \\ \hline
35 & 18496 & 18755 & 73 & 396900 & 402232 & 111 & 2205225 & 2234052 & 149 & 7295401 & 7389857  \\ \hline
36 & 20808 & 21123 & 74 & 419580 & 425378 & 112 & 2286900 & 2317281 & 150 & 7495275 & 7592775  \\ \hline
37 & 23409 & 23735 & 75 & 443556 & 449454 & 113 & 2371600 & 2402698 & 151 & 7700625 & 7800269  \\ \hline
38 & 26163 & 26569 & 76 & 468198 & 474646 & 114 & 2457840 & 2490330 & 152 & 7908750 & 8011775  \\ \hline
39 & 29241 & 29634 & 77 & 494209 & 500829 & 115 & 2547216 & 2580585 & 153 & 8122500 & 8227332  \\ \hline
40 & 32490 & 32987 & 78 & 520923 & 528021 & 116 & 2638188 & 2673148 & 154 & 8339100 & 8447650  \\ \hline
41 & 36100 & 36602 & 79 & 549081 & 556423 & 117 & 2732409 & 2768049 & 155 & 8561476 & 8672145  \\ \hline
42 & 39900 & 40488 & 80 & 577980 & 585897 & 118 & 2828283 & 2865713 & 156 & 8786778 & 8900853  \\ \hline
43 & 44100 & 44711 & 81 & 608400 & 616464 & 119 & 2927521 & 2965811 & 157 & 9018009 & 9134515  \\ \hline
44 & 48510 & 49238 & 82 & 639600 & 648336 & 120 & 3028470 & 3068370 & 158 & 9252243 & 9372519  \\ \hline
45 & 53361 & 54081 & 83 & 672400 & 681367 & 121 & 3132900 & 3173840 & 159 & 9492561 & 9614904  \\ \hline
46 & 58443 & 59311 & 84 & 706020 & 715575 & 122 & 3239100 & 3281870 & 160 & 9735960 & 9862437  \\ \hline
47 & 64009 & 64893 & 85 & 741321 & 751191 & 123 & 3348900 & 3392490 & 161 & 9985600 & 10114482  \\ \hline
\end{tabular} \caption{The values of $H(n)$ and the lower bound of $\rcrs(K_n)$ given by Equations~\ref{eq:cr_kedges} and~\ref{eq:lower_E} for $n=10,\dots,161$} \label{tab:values}
\end{table}

We continuously use that:
\begin{itemize}
    \item if $x$ is an even integer, then $\lfloor (x-1)/2 \rfloor=(x-2)/2=x/2-1$;
    \item and if $x$ is an odd integer, then $\lfloor x/2 \rfloor=(x-1)/2$.
\end{itemize}

\begin{lemma}\label{lem:H}
    \[\frac{H(n)}{3 \binom{n}{4}}\le \frac{1}{8} \left( 1-\frac{2}{n}\right).\]
\end{lemma}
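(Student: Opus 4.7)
The plan is to clear denominators, split into the even/odd cases for $n$ so the floor functions disappear, and reduce the remaining polynomial inequality in each case to a trivial AM-GM-type comparison of two products of consecutive-like integers.

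First I would rewrite the target inequality by substituting $3\binom{n}{4}=\tfrac{n(n-1)(n-2)(n-3)}{8}$ and $\tfrac{1}{8}(1-2/n)=\tfrac{n-2}{8n}$. Cross-multiplying the inequality $\tfrac{H(n)}{3\binom{n}{4}}\le \tfrac{n-2}{8n}$ and plugging in the definition of $H(n)$ reduces the claim to
\[16\,\lfloor n/2\rfloor\lfloor(n-1)/2\rfloor\lfloor(n-2)/2\rfloor\lfloor(n-3)/2\rfloor \;\le\; (n-1)(n-2)^2(n-3).\]

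Next I would dispatch the two parities using the two bullet points recorded just before the lemma. If $n$ is even, the four floor factors equal $\tfrac{n}{2},\tfrac{n-2}{2},\tfrac{n-2}{2},\tfrac{n-4}{2}$, so the left side equals $n(n-2)^2(n-4)$, and the inequality reduces to $n(n-4)\le (n-1)(n-3)$, i.e.\ $n^2-4n\le n^2-4n+3$. If $n$ is odd, the four floor factors equal $\tfrac{n-1}{2},\tfrac{n-1}{2},\tfrac{n-3}{2},\tfrac{n-3}{2}$, so the left side equals $(n-1)^2(n-3)^2$, and the inequality reduces to $(n-1)(n-3)\le (n-2)^2$, i.e.\ $n^2-4n+3\le n^2-4n+4$. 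Both are immediate.

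There is essentially no obstacle: once the floors are resolved by parity, the bound collapses to the trivial fact that the product of two integers differing by $2k$ is at most the square of their average, applied once in each case. The only thing to be careful about is correctly identifying which floor evaluates to which expression for each parity, which is exactly what the two bullet points preceding the lemma are set up to handle.
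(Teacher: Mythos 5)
Your proposal is correct and follows essentially the same route as the paper's proof: split on the parity of $n$, resolve the four floors, and reduce to the elementary comparisons $n(n-4)\le(n-1)(n-3)$ and $(n-1)(n-3)\le(n-2)^2$. The only cosmetic difference is that you clear denominators up front while the paper manipulates the ratio directly; the underlying computation is identical.
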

\begin{proof}
    If $n$ is even, then

 \begin{align*}
        \frac{H(n)}{3 \binom{n}{4}} & =\frac{2}{n(n-1)(n-2)(n-3)}\left \lfloor \frac{n}{2} \right \rfloor \left \lfloor \frac{n-1}{2} \right \rfloor \left \lfloor \frac{n-2}{2} \right \rfloor \left \lfloor \frac{n-3}{2} \right \rfloor \\
        &= \frac{2}{n(n-1)(n-2)(n-3)} \left (\frac{n}{2} \right ) \left ( \frac{n-2}{2} \right) \left ( \frac{n-2}{2} \right ) \left ( \frac{n-4}{2} \right) \\
        &= \frac{1}{(n-1)(n-3)}  \left ( \frac{1}{2} \right ) \left ( \frac{n-2}{2} \right ) \left ( \frac{n-4}{2} \right ) \\
        &= \frac{1}{8(n-1)(n-3)}   \left ((n-2)(n-4) \right) \\ 
        &= \frac{n-2}{8}   \left (\frac{(n-4)}{(n-1)(n-3)} \right) \\ 
        &= \frac{n-2}{8}   \left (\frac{(n-4)}{n^2-4n+3} \right )\\ 
        &= \frac{n-2}{8}   \left (\frac{(n-4)n}{(n^2-4n+3)n} \right ) \\ 
        &= \frac{n-2}{8}   \left (\frac{n^2-4n}{(n^2-4n+3)n} \right) \\ 
       & < \frac{n-2}{8} \left (\frac{1}{n}
        \right) \\ 
        & = \frac{1}{8}\left (1-\frac{2}{n} \right ).
    \end{align*}

     If $n$ is odd, then

       \begin{align*}
        \frac{H(n)}{3 \binom{n}{4}} & =\frac{2}{n(n-1)(n-2)(n-3)}\left \lfloor \frac{n}{2} \right \rfloor \left \lfloor \frac{n-1}{2} \right \rfloor \left \lfloor \frac{n-2}{2} \right \rfloor \left \lfloor \frac{n-3}{2} \right \rfloor \\
        &= \frac{2}{n(n-1)(n-2)(n-3)} \left (\frac{n-1}{2} \right ) \left ( \frac{n-1}{2} \right) \left ( \frac{n-3}{2} \right ) \left ( \frac{n-3}{2} \right) \\
        &= \frac{1}{8n} \left (  \frac{(n-1)(n-3)}{n-2} \right) \\
        &= \frac{n-2}{8n} \left (  \frac{(n-1)(n-3)}{(n-2)(n-2)} \right) \\
        &= \frac{n-2}{8n} \left (  \frac{n^2-4n+3}{n^2-4n+4} \right) \\
    &< \frac{1}{8} \left ( \frac{n-2}{n} \right) \\
            & = \frac{1}{8}\left (1-\frac{2}{n} \right ).
    \end{align*}
\end{proof}

%
%
%
\begin{proof}[Lemma~\ref{lem:Hnr}]
Let \[A:=\frac{3}{8} \binom{r}{4} \frac{n^4}{r^4}=\frac{1}{64}\cdot \frac{(r-1)(r-2)(r-3)}{r^3}n^4,\]
\[B:=r\left \lfloor \frac{n/r}{2} \right \rfloor \left \lfloor \frac{n/r-1}{2} \right \rfloor \left \lfloor \frac{n-n/r}{2} \right \rfloor \left \lfloor \frac{n-n/r-1}{2} \right \rfloor,\]
and
\[C:=\binom{r}{2}\left  \lfloor \frac{n/r}{2} \right \rfloor ^2  \left  \lfloor \frac{n/r-1}{2} \right \rfloor ^2. \]

 If $n/r$ is even, then
\begin{align*}
B & =  r\left ( \frac{n}{2r} \right ) \left (\frac{n}{2r}-1 \right ) \left (\frac{n}{2}-\frac{n}{2r} \right ) \left ( \frac{n}{2}-\frac{n}{2r}-1 \right ) \\
  & = \frac{n}{2} \left (\frac{n}{2r}-1 \right )\left( \frac{n}{2} \left ( \frac{r-1}{r}\right )\right) \left( \frac{n}{2} \left ( \frac{r-1}{r} \right)-1\right ) \\
  & = \left ( \frac{n^2}{4r} -\frac{n}{2} \right ) \left( \frac{n^2}{4}  \left ( \frac{r-1}{r}\right )^2-\frac{n}{2} \left ( \frac{r-1}{r} \right )\right ) \\
  & =\frac{n^4}{16r}\left (\frac{r-1}{r} \right)^2-\frac{n^3}{8r} \left ( \frac{r-1}{r} \right )-\frac{n^3}{8}\left ( \frac{r-1}{r}\right )^2+O(n^2);
\end{align*}

 If $n/r$ is odd, and $n$ is even, then
\begin{align*}
B & = r\left \lfloor \frac{n/r}{2} \right \rfloor \left \lfloor \frac{n/r-1}{2} \right \rfloor \left \lfloor \frac{n-n/r}{2} \right \rfloor \left \lfloor \frac{n-n/r-1}{2} \right \rfloor \\
  & = r\left ( \frac{n/r-1}{2} \right ) \left (\frac{n/r-1}{2} \right ) \left ( \frac{n-n/r-1}{2} \right ) \left ( \frac{n-n/r-1}{2} \right ) \\
  &= \frac{r}{16} \left ( \frac{n^2}{r^2} -\frac{2n}{r}+1\right ) \left ( \left ( \frac{r-1}{r} \right )^2 n^2-2 \left ( \frac{r-1}{r} \right )n+1 \right)\\
  & =\frac{n^4}{16r}\left ( \frac{r-1}{r} \right )^2-\frac{n^3}{8r} \left ( \frac{r-1}{r} \right )-\frac{n^3}{8}\left ( \frac{r-1}{r}\right )^2+O(n^2) \\
\end{align*}

If $n/r$ is odd and $n$ is odd, then
\begin{align*}
B & = r\left \lfloor \frac{n/r}{2} \right \rfloor \left \lfloor \frac{n/r-1}{2} \right \rfloor \left \lfloor \frac{n-n/r}{2} \right \rfloor \left \lfloor \frac{n-n/r-1}{2} \right \rfloor \\
  & = r\left ( \frac{n/r-1}{2} \right ) \left (\frac{n/r-1}{2} \right ) \left ( \frac{n-n/r}{2} \right ) \left ( \frac{n-n/r-2}{2} \right ) \\
  &= \frac{r}{16} \left ( \frac{n^2}{r^2} -\frac{2n}{r}+1\right ) \left ( \left ( \frac{r-1}{r} \right )^2 n^2-2 \left ( \frac{r-1}{r} \right )n\right)\\
  & =\frac{n^4}{16r}\left ( \frac{r-1}{r} \right )^2-\frac{n^3}{8r} \left ( \frac{r-1}{r} \right )-\frac{n^3}{8}\left ( \frac{r-1}{r}\right )^2+O(n^2) \\
\end{align*}

If $n/r$ is even, then
\begin{align*}
C &= \binom{r}{2}\left  \lfloor \frac{n/r}{2} \right \rfloor ^2  \left  \lfloor \frac{n/r-1}{2} \right \rfloor ^2 \\
  & \frac{r(r-1)}{2} \cdot \frac{n^2}{4 r^2} \left ( \frac{n}{2r} -1 \right )^2 \\
 &= \frac{r-1}{32r^3}n^4-\frac{r-1}{8r^2}n^3+O(n^2);
 \end{align*}
if $n/r$ is odd, then 
\begin{align*}
C &= \binom{r}{2}\left  \lfloor \frac{n/r}{2} \right \rfloor ^2  \left  \lfloor \frac{n/r-1}{2} \right \rfloor ^2 \\
  & =\frac{r(r-1)}{2} \left (\frac{n/r-1}{2} \right )^2 \left ( \frac{n/r-1}{2} \right )^2 \\
  &=\frac{r(r-1)}{32} \left ( \frac{n^4}{r^4}-\frac{4n^3}{r^3}+O(n^2) \right )\\
  &= \frac{r-1}{32r^3}n^4-\frac{r-1}{8r^2}n^3+O(n^2).
\end{align*}

Therefore,
\begin{align*}
H(n,r) & \le  A+B-C+O(n^2) \\
        &= \frac{1}{16} \left ( \frac{(r-1)(r-2)(r-3)}{4r^3}+\frac{(r-1)^2}{r^3}-\frac{r-1}{2r^3} \right )n^4 \\
        &+\frac{1}{8}\left ( -\frac{r-1}{r^2} -\frac{(r-1)^2}{r^2}+\frac{r-1}{r^2}\right ) n^3 \\
        &+ O(n^2)\\
        &= \frac{1}{16} \left ( \frac{r^3-2r+r}{4r^3}\right ) n^4 +n^3 +O(n^2) -\frac{1}{16}\left ( \frac{r-1}{r}\right )^2 2 n^3+O(n^2) \\
        & =  \frac{1}{16} \left ( \frac{r-1}{r}\right )^2 \left ( \frac{n^4}{4} -2 n^3 \right ) +O(n^2).
\end{align*}
\end{proof}

\begin{lemma}\label{lem:knr}
    If $n$ is a multiple of $r$, then 
    \[ \left ( \binom{||K_n^r||}{2}-\sum_{v\in V(K_n^r)} \binom{d(v)}{2} \right )= \frac{1}{2} \left(  \frac{r-1}{r}\right )^2 \left ( \frac{n^4}{4}-n^3\right )+O(n^2).\]
\end{lemma}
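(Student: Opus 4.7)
The proof is a direct calculation, so the plan is to write down each quantity explicitly and keep track of the leading two terms.

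First I would observe that because $n$ is a multiple of $r$, every partition class has exactly $n/r$ vertices and so every vertex $v$ has degree
\[ d(v) = n - \frac{n}{r} = \frac{(r-1)n}{r}. \]
By the handshake lemma this immediately yields
\[ \|K_n^r\| = \frac{n \cdot d(v)}{2} = \frac{(r-1)n^2}{2r}. \]

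Next I would compute the two binomial sums separately, discarding anything of order $n^2$. For the first term,
\[ \binom{\|K_n^r\|}{2} = \frac{1}{2}\left(\frac{(r-1)n^2}{2r}\right)^2 - \frac{1}{2}\cdot\frac{(r-1)n^2}{2r} = \frac{1}{8}\left(\frac{r-1}{r}\right)^{\!2} n^4 + O(n^2). \]
For the second, since all $n$ vertices have identical degree,
\[ \sum_{v\in V(K_n^r)} \binom{d(v)}{2} = n\cdot \binom{(r-1)n/r}{2} = \frac{(r-1)n^2}{2r}\!\left(\frac{(r-1)n}{r}-1\right) = \frac{1}{2}\left(\frac{r-1}{r}\right)^{\!2} n^3 + O(n^2). \]

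Subtracting these two expansions gives
\[ \binom{\|K_n^r\|}{2} - \sum_{v} \binom{d(v)}{2} = \frac{1}{8}\left(\frac{r-1}{r}\right)^{\!2} n^4 - \frac{1}{2}\left(\frac{r-1}{r}\right)^{\!2} n^3 + O(n^2), \]
and factoring out $\tfrac{1}{2}\left(\tfrac{r-1}{r}\right)^{2}$ yields precisely $\tfrac{1}{2}\left(\tfrac{r-1}{r}\right)^{2}\!\left(\tfrac{n^4}{4} - n^3\right) + O(n^2)$, as claimed. There is no real obstacle here; the only thing to be careful about is tracking the $n^3$ contributions from both $\binom{M}{2} = M^2/2 - M/2$ and from the lower-order term of $\binom{d(v)}{2}$, since both feed into the $n^3$ coefficient of the final expression.
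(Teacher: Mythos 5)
Your proof is correct and follows essentially the same direct computation as the paper: same formulas for $\|K_n^r\|$ and $d(v)$, same expansion of the two binomial terms, same bookkeeping of the $n^3$ contribution. The only cosmetic difference is that you derive the edge count via the handshake lemma rather than by summing $n^2/r^2$ over pairs of classes.
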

\begin{proof}
Every set in the partition has $n/r$ vertices. Thus, the number  of edges between two different
sets is equal to $n^2/r^2$. Therefore, 
\[||K_n^r||=\frac{n^2}{r^2}\binom{r}{2}=\frac{n^2}{2} \cdot \frac{r-1}{r},\]
and
\[\binom{|| K_n^r||}{2}=\frac{n^4}{8}\left (\frac{r-1}{r}\right )^2-\frac{n^2}{4}\left (\frac{r-1}{r}\right ).\]
For every vertex $v$ of $K_n^r$, it holds that 
\[d(v)=\frac{r-1}{r}n.\]
Thus, 
\[\sum_{v \in V(K_n^r)} \binom{d(v)}{2}=\frac{n^3}{2}\left ( \frac{r-1}{r} \right )^2-\frac{r-1}{r} \cdot \frac{n^2}{2}.\]
It follows that 
\begin{align*}
\left ( \binom{||K_n^r||}{2}-\sum_{v\in V(K_n^r)} \binom{d(v)}{2} \right )   & = \frac{n^4}{8}\left (\frac{r-1}{r}\right )^2-\frac{n^2}{4}\left (\frac{r-1}{r}\right )- \frac{n^3}{2}\left ( \frac{r-1}{r} \right )^2+\frac{n^2}{2} \left (\frac{r-1}{r} \right ) \\
                  &=  \frac{1}{2} \left (\frac{r-1}{r} \right )\left ( \frac{n^4}{4} \left ( \frac{r-1}{r} \right )-\frac{n^2}{2}-n^3 \left (\frac{r-1}{r} \right )+n^2 \right )\\
                & =\frac{1}{2} \left (\frac{r-1}{r} \right )^2 \left(\frac{n^4}{4}-n^3 \right) +O(n^2).
\end{align*}
\end{proof}

Using Equation~\ref{eq:random_embeding},and Lemmas~\ref{lem:H} and~\ref{lem:knr}, we can prove Theorem~\ref{thm:Knr_gen}.

\begin{proof}[Theorem~\ref{thm:Knr_gen}]
By Equation~\ref{eq:random_embeding}, it holds that

\[E(\crs(D))  = \frac{H(n)}{3\binom{n}{4}} \left ( \binom{||K_n^r||}{2}-\sum_{v\in V(K_n^r)} \binom{d(v)}{2} \right ).\]
Applying Lemmas~\ref{lem:H} and~\ref{lem:knr} on the equality above yields 
\begin{align*}
              E(\crs(D))  & \le \frac{1}{8} \left ( 1-\frac{2}{n}\right )  \left (  \frac{1}{2} \left(  \frac{r-1}{r}\right )^2 \left ( \frac{n^4}{4}-n^3\right )+O(n^2) \right ) \\
                & \le \frac{1}{16} \left (\frac{r-1}{r} \right )^2 \left(\frac{n^4}{4}-\frac{3n^3}{2} \right) +O(n^2).
\end{align*}
\end{proof}

\begin{proof}[Theorem~\ref{lem:upper_rect}]
From Equation~\ref{eq:random_embeding} and the best upper bound known for $\rcrs(K_n)$, it follows that 
\begin{align*}
    E(\rcrs(\overline{D}))  &= \frac{\rcrs(K_n)}{3\binom{n}{4}} \left ( \binom{||K_n^r||}{2}-\sum_{v\in V(K_n^r)} \binom{d(v)}{2} \right )\\
                & = \frac{\overline{q} \binom{n}{4}+o(n^4)}{3\binom{n}{4}} \left (  \frac{1}{2} \left(  \frac{r-1}{r}\right )^2 \left ( \frac{n^4}{4}-n^3\right )+O(n^2) \right ) \\
                & \le \frac{\overline{q}}{4!} \left( \frac{r-1}{r} \right )^2 n^4+o(n^4).
\end{align*}
\end{proof}


\begin{proof}[Theorem~\ref{lem:lower_rect}]
Let $D$ be a rectilinear drawing of $K_n^r$. Let $D'$ be a rectilinear drawing of $K_r$ obtained
by choosing one point from each color class of $D$. There are $(n/r)^r$ such choices; and each choice
provides at least $\rcrs(K_r)$ crossings. Each  such crossing is counted exactly $(n/r)^{r-4}$ times.
\end{proof}

\begin{proof}[Corollary~\ref{cor:lime}]
We have that 
\begin{align*} 
\lim_{n \to \infty} \frac{\rcrs(K_n^r)}{\binom{n}{4}}  &\ge \lim_{n \to \infty} \rcrs(K_r) \cdot \left(  \frac{n}{r}\right )^4\cdot \frac{4!}{n(n-1)(n-2)(n-3)} \\
& \ge \lim_{r \to \infty} \frac{\rcrs(K_r) }{(\frac{r^4}{4!})} \\
& =\overline {q}.
\end{align*}
By Theorem~\ref{lem:upper_rect}, $\lim_{n \to \infty} \frac{\rcrs(K_n^r)}{\binom{n}{4}} \le \overline{q} \left( \frac{r-1}{r} \right )^2.$ 

As $\left( \frac{r-1}{r} \right )^2 < 1$, it follows that
\begin{align*} 
\lim_{n \to \infty} \frac{\rcrs(K_n^r)}{\binom{n}{4}}  = \overline {q}.
\end{align*}
\end{proof}

To prove Theorem~\ref{thm:layers_random}, we use the following proposition.
\begin{prop}\label{prop:layer}
    Let $r$ be a positive integer and let $n$ be a multiple of $r$.
    Then \[\left ( \binom{||L_n^r||}{2}-  \sum_{v \in V(L_n^r)} \binom{d(v)}{2} \right ) =\frac{(r-1)^2}{2r^4}n^4-\frac{2r-3}{r^3}n^3+\frac{r-1}{r^2}n^2.\]
\end{prop}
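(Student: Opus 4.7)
The plan is a direct counting argument in terms of $m := n/r$, the common size of a layer. Essentially everything reduces to knowing the edge count $||L_n^r||$ and the degree sequence, after which the claim follows by expanding $\binom{\cdot}{2}$ and substituting $m = n/r$.

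First I would compute the edge count. Since edges appear only between consecutive layers and all $m^2$ possible such edges are present, we have $||L_n^r|| = (r-1)m^2$. Expanding,
\[\binom{||L_n^r||}{2} \;=\; \frac{(r-1)^2 m^4 - (r-1)m^2}{2}.\]

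Next I would classify vertices by layer to obtain the degree contribution. The $2m$ vertices lying in the two extreme layers $V_1$ and $V_r$ are adjacent only to the single neighboring layer and therefore have degree $m$; the remaining $(r-2)m$ vertices sit in an inner layer $V_i$ with $2 \le i \le r-1$, and so are adjacent to all vertices of both $V_{i-1}$ and $V_{i+1}$, giving degree $2m$. Hence
\[\sum_{v \in V(L_n^r)} \binom{d(v)}{2} \;=\; 2m\binom{m}{2} + (r-2)m\binom{2m}{2},\]
which after expanding simplifies to $(2r-3)m^3 - (r-1)m^2$.

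Finally I would subtract the two quantities and substitute $m = n/r$, which converts $m^k$ to $n^k/r^k$ for $k = 2,3,4$ and collects the three displayed monomials in $n$. The computation is conceptually routine; the only real obstacle is keeping careful track of the lower-order terms (in particular, correctly combining the $-(r-1)m^2/2$ coming from $\binom{||L_n^r||}{2}$ with the $+(r-1)m^2$ coming from $-\sum_v \binom{d(v)}{2}$), and being sure to use $r-2$ rather than $r-1$ inner layers.
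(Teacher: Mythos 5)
Your proposal follows essentially the same route as the paper's proof: the identical edge count $||L_n^r||=(r-1)m^2$, the identical degree classification ($2m$ boundary vertices of degree $m$ and $(r-2)m$ interior vertices of degree $2m$), and the same final subtraction. One remark: the combination you rightly flag, $-\tfrac{(r-1)m^2}{2}+(r-1)m^2=\tfrac{(r-1)m^2}{2}$, actually yields $\tfrac{r-1}{2r^2}n^2$ as the lowest-order term, so the coefficient $\tfrac{r-1}{r^2}$ in the displayed statement (which the paper itself asserts) is off by a factor of $2$ there; this is immaterial to the way the proposition is used, since only the leading $n^4$ term matters downstream.
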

\begin{proof}
Note that \[||L_n^r||=(r-1)\left ( \frac{n}{r} \right )^2;\]
and
\[\binom{||L_n^r||}{2}=\frac{(r-1)^2}{2r^4}n^4-\frac{r-1}{2r^2}n^2.\]
We have that
\begin{align*}
    \sum_{v \in V(L_n^r)} \binom{d(v)}{2} & =\frac{2n}{r} \binom{n/r}{2}+\frac{(r-2)n}{r}\binom{2n/r}{2} \\
    & = \frac{2n}{r} \left ( \frac{n^2}{2r^2}-\frac{n}{2r}\right ) +\frac{(r-2)n}{r} \left ( \frac{2n^2}{r^2}-\frac{n}{r}\right ) \\
    & =\frac{2r-3}{r^3}n^3-\frac{r-1}{r^2}n^2.
\end{align*}
Thus,
\[\left ( \binom{||L_n^r||}{2}-  \sum_{v \in V(L_n^r)} \binom{d(v)}{2} \right ) =\frac{(r-1)^2}{2r^4}n^4-\frac{2r-3}{r^3}n^3+\frac{r-1}{r^2}n^2.\]
\end{proof} 

Combining Proposition~\ref{prop:layer}, Equation~\ref{eq:random_embeding} and Lemma~\ref{lem:H}, we obtain Theorem~\ref{thm:layers_random}.
\begin{proof}[Theorem~\ref{thm:layers_random}]
\begin{align*}
   \crs(L_n^r) \le &E(\crs(D)) = \frac{H(n)}{3\binom{n}{4}} \left ( \binom{||L_n^r||}{2}-\sum_{v\in V(L_n^r)} \binom{d(v)}{2} \right ) \le\\
   &\frac{1}{8} \left( 1-\frac{2}{n}\right)\left(\frac{(r-1)^2}{2r^4}n^4+O(n^3)\right)\le \frac{(r-1)^2}{16 r^4}n^4+O(n^3).
   \end{align*}
\end{proof}

\begin{proof}[Lemma~\ref{lem:seed}]
We classify the crossings of $D^s$ depending on the number of different clusters in which the endpoints of the edges defining the crossing appear. Let $e_1$ and $e_2$ be a pair of edges of $D^s$ that cross.

Suppose that the endpoints of $e_1$ and $e_2$ appear in four different clusters.
We have that $e_1=(u,i)(v,j)$ and $e_2=(w,k)(x,l)$ for some four distinct vertices
$u,v,w,x$ of $D$ and indices $1 \le i,j,k,l \le s$. Thus, $uv,wx$ is a pair
of crossing edges in $D$; and for each pair of crossing edges in $D$ we obtain
$s^4$ pairs of crossing edges of $D^s$, such that its endpoints lie in four different clusters. Therefore, the number of crossings of $D^s$ generated by pairs of edges whose endpoints lie in four different clusters is equal to
\[\crs(D)s^4.\]

Suppose that the endpoints of $e_1$ and $e_2$ lie in three different clusters.
Without loss of generality $e_1=(u,i)(v,j)$ and $e_2=(u,k)(w,l)$ for some three distinct
vertices $u,v,w$ of $D$ and indices $1 \le i,j,k,l \le s$. Note that $v$ and $w$ lie on the same side of  $\ell_u$, otherwise
 $\ell_u$ separates the edge $uv$ from the edge $uw$ and no crossing between $e_1$ and $e_2$ would be possible. 
Conversly, for every pair of vertices of $D$ lying on the same side of $\ell_u$ we obtain $\binom{s}{2}s^2$ crossings
in $D^s$ generated by pairs of edges whose endpoints lie in three different clusters. Therefore,
the number of crossings of $D^s$ generated by pairs of edges whose endpoints lie in three different clusters
is equal to
\[\sum_{v \in V(G)} \left ( \binom{ \lfloor d(v)/2 \rfloor}{2}+ \binom{\lceil d(v)/2 \rceil}{2}\right)\frac{s^3(s-1)}{2}.\]

Suppose that the endpoints of $e_1$ and $e_2$ lie in two different clusters.
We have that $e_1=(u,i)(v,j)$ and $e_2=(u,k)(v,l)$ for some edge $uv$ of $D$
and indices $1 \le i,j,k,l \le s$; and for every edge of $D$ we obtain $\binom{s}{2}\binom{s}{2}$ crossings
in $D^s$ generated by pairs of edges whose endpoints lie in two different clusters.
Therefore,
the number of crossings of $D^s$ generated by pairs of edges whose endpoints lie in two different clusters
is equal to
\[||G||\frac{s^2(s-1)^2}{4}.\]
\end{proof}

We now give the coordinates of the rectilinear drawing $D$ of $K_{24}^4$ with 2033 crossings. The colors are $0,1,2$ and $3$.
We have appended the color of each point as a third coordinate.
\begin{align*}
V(D)=\{&(-59260959, 44970123, 0),
 (261261347, -43693014, 0),
 (158829052, -28658158, 0),\\
&(-20273112, -23913465, 0),
 (20602644, -8343316, 0),
 (-8148611, -63519416, 0), \\
 & (30209164, 4850528, 1),
 (12317574, -161508817, 1),
 (46649346, -344926319, 1),\\
&(-11015825, -47872739, 1),
 (-26347789, 22655563, 1),
 (-46729617, 35472331, 1),\\
 & (-74136586, 66127255, 2),
 (-278900322, 316137789, 2),
 (14791528, -20163276, 2),\\
 &(-140757971, 147565111, 2),
 (14081248, -20874215, 2),
 (9903931, -24183515, 2), \\
&(-38516867, 27953341, 3),
 (-60922797, 47350463, 3),
 (8267623, -135305393, 3),\\
 &(-15043716, -39580158, 3),
 (41831995, 797354, 3),
 (181333931, -34086725, 3)\}.
 \end{align*}
 The vertices of this drawing can be seen in Figure~\ref{fig:k24}.
 \begin{figure}
	\centering
	\includegraphics[width=1.0\linewidth]{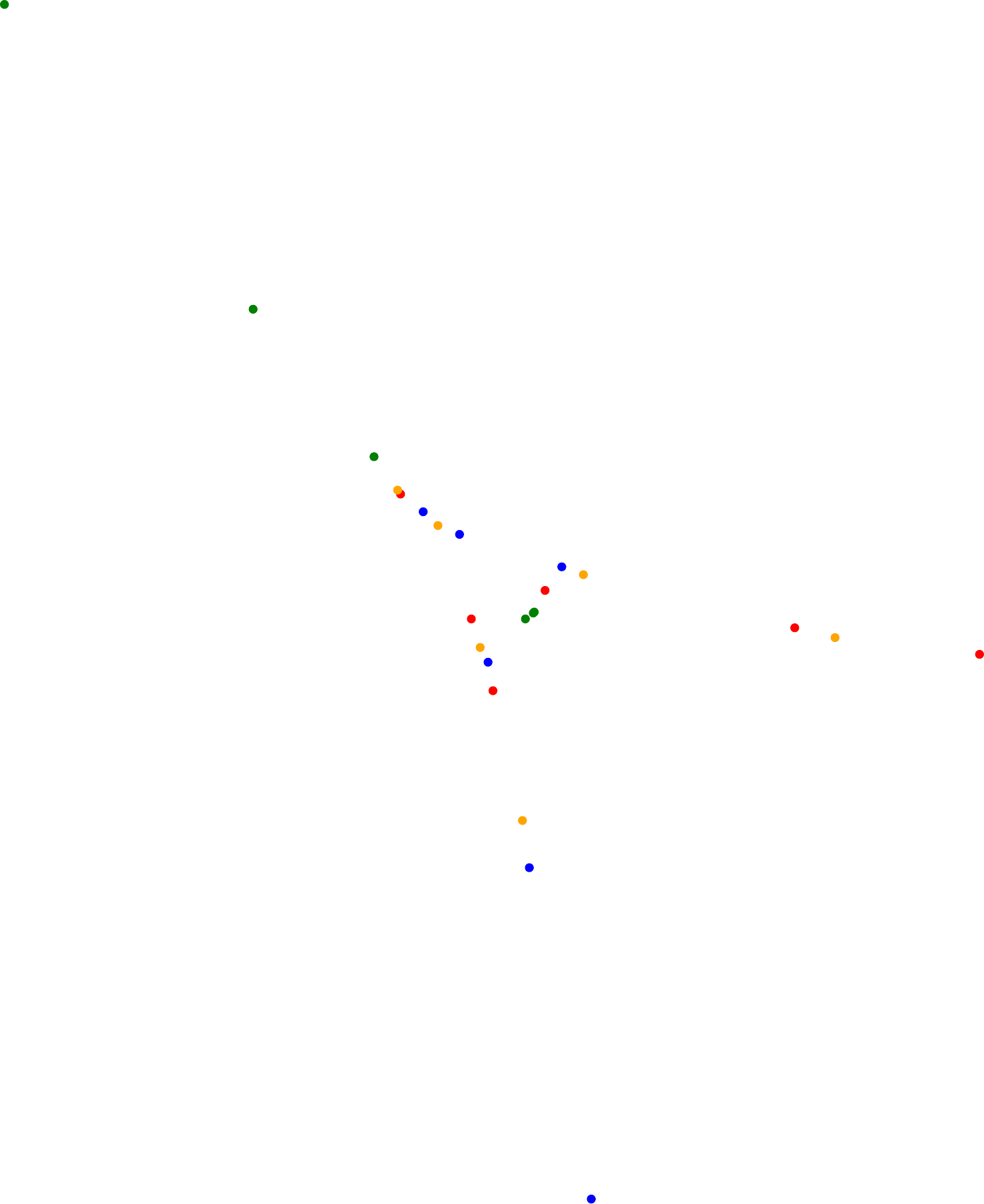}
	\caption{The vertices of a rectilinear drawing of $K_{24}^4$}
	\label{fig:k24}
\end{figure}
\end{document}